\def\namedlabel#1#2{\begingroup
   \def\@currentlabel{#2}%
   \label{#1}\endgroup
}
\newtheorem{theorem}{Theorem}[section]
\newtheorem{corollary}{Corollary}[section]
\newtheorem{lemma}{Lemma}[section]
\theoremstyle{remark}
\newtheorem{remark}[theorem]{Remark}
\newtheorem{example}{Example}
\newcommand {\NN } {{\mathbb N}}
\newcommand {\RR } {{\mathbb R}}
\def \vector#1{\mathbf{#1}}
\newcommand {\X}{\vector{x}}
\newcommand {\I}{[0,T]}
\newcommand {\Iopen}{(0,T)}
\def\half{{\frac{1}{2}}}
\newcommand {\domain}{\Theta}
\newcommand {\pdt}{\partial_t}
\newcommand {\pdtt}{\partial_{tt}}
\DeclareMathOperator{\Leb}{L}
\DeclareMathOperator{\Lip}{Lip}
\DeclareMathOperator{\Cont}{C}
\DeclareMathOperator{\Hi}{H}
\DeclareMathOperator{\di}{d\hspace{-1.5pt}}
\DeclareMathOperator{\didiff}{d}
\newcommand {\dx}{ \di x}
\newcommand {\dt}{ \di t}
\newcommand {\ds}{\ \di s}
\newcommand {\dr}{\ \di r}
\newcommand {\dX}{\ \di \X}
\newcommand {\scal}[2]{\left(#1,#2\right)}
\newcommand{\abs}[1]{\left\lvert #1 \right\rvert}
\newcommand{\vnorma}[1]{\left\|#1\right\|}
\newcommand {\imbed} {\hookrightarrow}
\newcommand {\lp}[1]{\Leb^{#1} (\domain )}
\newcommand {\Lp}[1]{{\bf L}^{#1} (\domain )}
\newcommand {\hk}[1]{\Hi^{#1}(\domain )}
\newcommand {\hko}[1]{\Hi^{#1}_0(\domain )}
\newcommand {\lpkIX}[2]{\Leb^{#1}\left(\Iopen,#2\right)}
\newcommand {\cIX}[1]{\Cont\left(\I,#1\right)}
\begin{document}

	\title[Nonlinear wave equation with variable-order damping]{On the Rothe-Galerkin spectral discretisation for a class of variable fractional-order nonlinear wave equations}

	\author[K. Van Bockstal]{Karel Van Bockstal$^1$} 
	\thanks{The work of K.~Van Bockstal was supported by the Methusalem programme of Ghent University Special Research Fund (BOF) (Grant Number 01M01021).} 

\author[M.~S. A. Zaky]{Mahmoud A. Zaky$^{2,3}$}

 \author[A.~S. Hendy]{Ahmed S. Hendy$^{4,5}$}
\thanks{A. S. Hendy wishes to acknowledge the support of the RSF, Russia grant, project 22-21-00075. }

\address[1]{Ghent Analysis \& PDE center, Department of Mathematics: Analysis, Logic and Discrete Mathematics, Ghent University, Krijgslaan 281, 9000 Ghent, Belgium}

\email{karel.vanbockstal@ugent.be}

\address[2]{Department of Mathematics and Statistics, College of Science, Imam Mohammad Ibn Saud Islamic University, Riyadh, Saudi Arabia}
\email{ma.zaky@yahoo.com}
\address[3]{Department of Applied Mathematics, National Research Centre, Dokki, Giza 12622, Egypt}

	\address[4]{Department of Computational Mathematics and Computer Science, Institute of Natural Sciences and Mathematics, Ural Federal University, 19 Mira St., Yekaterinburg 620002, Russia}
 	\email{ahmed.hendy@fsc.bu.edu.eg}
 \address[5]{Department of Mathematics, Faculty of Science, Benha University, Benha 13511, Egypt}

	\subjclass[2020]{35A01, 35A02, 35A15, 35R11, 65M12, 65M60, 33E12}
	\keywords{variable-order, wave equation, Rothe's discretisation, Galerkin spectral method, existence and uniqueness}
	
	\begin{abstract} 
In this contribution, a wave equation with a time-dependent variable-order fractional damping term and a nonlinear source is considered. Avoiding the circumstances of expressing the nonlinear variable-order fractional wave equations via closed-form expressions in terms of special functions, we investigate the existence and uniqueness of this problem with Rothe's method. First, the weak formulation for the considered wave problem is proposed.  Then, the uniqueness of a solution is established by employing Gr\"onwall's lemma. The Rothe scheme's basic idea is to use Rothe functions to extend the solutions on single-time steps over the entire time frame. Inspired by that, we next introduce a uniform mesh time-discrete scheme based on a discrete convolution approximation in the backward sense. By applying some reasonable assumptions to the given data, we can predict a priori estimates for the time-discrete solution. Employing these estimates side by side with Rothe functions leads to proof of the solution's existence over the whole time interval. Finally, the full discretisation of the problem is introduced by invoking Galerkin spectral techniques in the spatial direction, and numerical examples are given.
	\end{abstract}
	
	\maketitle
	
	\tableofcontents

\section{Introduction}
\label{sec:intro} 

\subsection{Formulation of the problem}

Let $\Theta \subset \RR^d$ be a bounded Lipschitz domain with  $\partial \Theta$ as the boundary. The variable-order fractional (V-OF) integral  ${}_0I_t^{\mu (t)} $, and the V-OF Caputo derivative $\frac {\partial ^{\mu(t)} } {\partial t ^{\mu(t)}}$ are defined, respectively, as \cite{lorenzo2002variable,zhuang2009numerical}
\begin{equation*}
{}_0I_t^{\mu (t)} \Phi(t): = \frac{1}{{\Gamma \left( {\mu (t)} \right)}}\int_0^t {\frac{{\Phi(r)}}{{(t - r)^{1 - \mu (t)} }}\dr} ,
\end{equation*}
\begin{equation}\label{eq:caputo_var_order}
\partial^{\mu(t)}_t \Phi(t):= {}_0I_t^{1 - \mu (t)} \Phi^\prime(t) = \frac{1}{{\Gamma\left( 1 -  {\mu (t)} \right)}}\int_0^t {\frac{{\Phi^\prime(r)}}{{(t - r)^{\mu (t)} }}\dr}.
\end{equation}
We consider $\mu \in \Leb^\infty(\I)$ such that 
\[
0 \leqslant \mu(t) \leqslant \bar \mu := \sup_{t\in\I} \mu(t) <1. 
\]
In this paper, we consider the following V-OF nonlinear problem   
  \begin{multline}
  \label{eq:problem}%
    \frac {\partial^2  \Phi} {\partial t^2} (\X,t) + \rho(\X,t)  \partial^{\mu(t)}_t \Phi (\X,t) \\
    = \nabla \cdot \left[\beta(\X,t) \nabla \Phi(\X,t) \right] +f(\Phi (\X , t))+Q(\X,t),\quad   (t,\X) \in   (0,T]\times \Theta,
\end{multline}
  with the initial-boundary conditions of the form
\begin{equation}
  \label{eq:ics_bcs}%
  \left\{ \begin{array}{rlr}
    \Phi (\X , 0) &= \tilde \Phi_0(\X), & \X  \in \Theta, \\
   \pdt  \Phi (\X , 0)& = \tilde\Psi_0(\X), & \X  \in \Theta,\\ 
    \Phi &=  0, &  \text{ on } \partial \Theta \times (0,T]. 
  \end{array} \right.
\end{equation}

\subsection{Literature}


One of the mean features of {{\color{black} constant order} Caputo time fractional subdiffusion equations and their applications is the initial weak singularity \cite{sakamoto2011initial,luchko2012initial}. {\color{black} The initial weak singularity also appears in its the time-fractional wave equation analogue \cite{Otarola2019}.
In this contribution, a wave equation with a time-dependent variable-order fractional damping term is considered. The solution to this problem will not exhibit this behaviour. 
}

 One possible inspiration for the model \eqref{eq:problem}-\eqref{eq:ics_bcs} is the vibration of a perfectly elastic membrane {\color{black} in viscoelastic media.}  As the equilibrium state of the membrane, it is tautly stretched along the boundary of the $\partial \Theta$  of the physical domain in the plane $\Theta$, and a $\Phi$  axis is constructed perpendicular to the $\X$  plane. {\color{black} More details about the reasoning for considering the variable-order fractional derivative as a damping term in this setting can be found in \cite[Section~2]{Zheng2022}. A glimpse of V-OF derivatives was initially given by Samko and Ross in 1993 \cite{samko1993fractional}. } For more explanations about the applications of V-OF  problems in mechanics, viscoelasticity, transport processes, and control theory, we refer to \cite{patnaik2020applications}. {\color{black}  Moreover, Sun et al. \cite{sun2019review} provided a survey of the recent literature and findings, including fundamental definitions,  numerical methods,  models, and their applications for V-OF differential equations. }

Theoretical findings on the well-posedness and regularity of V-OF problems are limited. {\color{black} A reason can be the impossibility of analysing through giving the analytical formulation in terms of special functions, which happens more easily in their constant-order counterparts.
} 
Concerning the space-dependent V-OF, we mention \cite{Kian2018,van2021existence,VanBockstal_2022_QM}. In \cite{Kian2018}, the existence of a unique weak solution (in the sense of the Laplace original) to the V-OF diffusion equation has been studied. The governing elliptic operator is supposed to be autonomous. Van Bockstal investigated a similar problem in \cite{van2021existence}. However, the coefficients accompanying the problem have a temporal and spatial variable dependency. Armed with the strongly positive definiteness of the governing kernel and the assumption of the belonging of the initial data to $\Hi_0^1 (\Theta)$, the existence of a unique weak solution has been derived. An analogue result for the space-dependent V-OF wave equation has been obtained in \cite{VanBockstal_2022_QM}.
Concerning the time-dependent V-OF, we state the contributions \cite{Wang2019b,Zheng2020,Zheng2021,Zheng2022,van2022existence}. In \cite{Wang2019b}, the analogue of \eqref{eq:problem} has been considered for the linear diffusion model but with a V-OF Riemann-Liouville fractional derivative (with $\mu \in \Cont(\I)$ satisfying $0 < \mu_m \leqslant \mu(t)\leqslant 1$ and $\lim_{t \searrow 0} (\mu(t)-\mu(0))\ln(t) =0$) and solely space-dependent coefficients. The authors establish the well-posedness of the problem in multiple space dimensions (using eigenfunction expansion on a smooth domain $\Theta$) and show that the regularity of the solution depends on the value of $\mu(0)$. Then, in \cite{Zheng2020}, the authors study the well-posedness of the diffusion problem with current-stated based V-OF operator \eqref{eq:caputo_var_order}. Also, in this situation, the regularity depends on the value of $\mu(0).$ In \cite{Zheng2021}, the authors study problem \eqref{eq:problem} for $\beta (\X,t) = K>0$ and $f=0$. Again using spectral decomposition, the authors prove the well-posedness of the problem if $\mu \in \Cont^1(\I)$ satisfies $0<\mu(t)<1.$ A full discretisation of this problem has been studied in \cite{Zheng2022}. Finally, in \cite{van2022existence}, we have investigated the existence and uniqueness of a history-state time-delay V-OF diffusion equation with damping subjected to weak assumptions on the data. A derivation of a priori estimates is given, and so the existence of the weak solution to the considered problem has been established on a specific time frame $\left[0, \displaystyle \lfloor \frac{T}{s} \rfloor s\right]$, such that $s>0$ is a fixed positive delay parameter conditioned by $s\leqslant T,$ where $T$ is the end time.\\

The new aspect of our contribution is that we formulate the problem on a more general domain and consider a time-dependent diffusion coefficient and nonlinear source term. Consequently, the spectral decomposition approach is not applicable, and for this reason, we tackle the problem with the aid of Rothe's method.  The method was originally constructed in \cite{rektorys1987some,Kacur1985} as a discretisation technique for partial differential equations. It  was introduced in \cite{Kacur1985} as an accurate theoretical tool for solving a wide range of evolution problems. Note that Rothe's methods have been also successfully applied when solving fractional evolution problems of constant order \cite{VanBockstal2020b,VanBockstal2020c}, and inverse problems \cite{hendy2022reconstruction,hendy2021solely}.  The main advantage of our approach is that we do not require $\mu \in \Cont^1(\I)$ as in \cite{Zheng2021,Zheng2022} but only $\mu \in \Leb^\infty(\I).$

\subsection{Aims and outline}

  We are giving a deep look at the existence and uniqueness of a weak solution to \eqref{eq:problem}-\eqref{eq:ics_bcs} by invoking Rothe's method, which aids in prolonging the
solutions on the single time steps on the whole time frame. Our analysis will be based on some reasonable assumptions on the given data (AS1)-(AS8). The importance of these assumptions comes from their invocation in the main results of this paper as in the uniqueness Theorem~\ref{thm:uniqueness1}, the auxiliary Lemmas~\ref{lemnn}, \ref{lem:est1} and the existence Theorem~\ref{thm:existence}. The weak formulation of \eqref{eq:problem}-\eqref{eq:ics_bcs} is introduced in Section~\ref{sec:weak_form} paying attention to the possibility of the above boundedness for the $\Leb^2$ norm of time Caputo V-OF derivative as in \eqref{eq:useful_estimate2} in the later context. Also, the regularity of the solution is given in Section~\ref{sec:weak_form} for a special case when $\mu(t)=\mu$ by employing Fourier analysis. Section~\ref{sec:weak_form} ends with a discussion of the uniqueness of the solution under an additional assumption. An equidistant partitioning of the time interval and a uniform mesh approximation of the V-OF Caputo operator is used to give the semi-discretisation of \eqref{eq:problem}-\eqref{eq:ics_bcs}. It is introduced in Section~\ref{sec:time_discretization} side by side with deriving a priori estimates for the time-discretised solution. With the aid of these estimates, which are given on single time steps, we study the existence of the solution in Section~\ref{sec:existence} by prolonging the discretised solutions on the whole time interval by Rothe functions. Before closing, we give some numerical experiments in Section~\ref{sec: numerical} by fully discretising the problem under consideration in space using Galerkin Legendre spectral polynomials.    


\section{Weak formulation} 
\label{sec:weak_form}

In our analysis, we put the following assumptions (AS) on the given data 
\begin{itemize}
    \item  (AS1):  $\mu\in \Leb^\infty(\I)$ with  $0 \leqslant \mu(t)\leqslant\bar \mu <1$; 
\item (AS2): $\rho,\beta \in {\mathcal X} := \Leb^\infty\left(\Theta \times \Iopen\right)$;
\item (AS3): $Q\in  \cIX{\lp{2}}$;
\item (AS4): $f:\mathbb{R}\to \mathbb{R}$ is Lipschitz continuous, i.e. 
there exists a strictly positive constant $L_f$ such that 
\[
    \abs{f(z_1) - f(z_2)}\leqslant L_f \abs{z_1-z_2};
\]
\item (AS5): $\beta \geqslant \tilde{\beta}_0$ a.e. in $\Theta \times \Iopen$;
\item (AS6): $\pdt\beta \in {\mathcal X}$;
\item (AS7): $\rho \geqslant 0$ a.e. in $\Theta \times \Iopen$; 
\item (AS8): $\tilde{\Phi}_0 \in \hko{1}$ and $\tilde\Psi_0 \in \lp{2}$. 
\end{itemize}
First, we note that
\begin{equation}\label{eq:useful_estimate}
\max_{r\in [0,t]} r^{\bar{\mu}-\mu (t)} \leqslant \max\{T,1\}, \quad \forall t \in \I,
\end{equation}
as $\bar{\mu}-\mu (t) \in [0,\bar{\mu}]\subset [0,1).$ 
Using $ 1= \Gamma(1) \leqslant \Gamma(x)$ for all $x\in(0,1)$, we see that 
\begin{multline*}
   \int_\Theta \int_0^T \abs{\rho(\X,t) \partial^{\mu(t)}_t \Phi(\X,t)}^2 \dt \dX \\
\overset{\eqref{eq:useful_estimate}}{\leqslant} \max\{T,1\}^2 \vnorma{\rho}^2_{{\mathcal X}} \int_\Theta \int_0^T \left(g \ast \abs{\pdt \Phi(\X)} \right)^2(t) \dt \dX, 
\end{multline*}
where 
\begin{equation*}\label{eq:def_kernel}
g(t):= t^{-\bar{\mu}}
\end{equation*}
and `$\ast$' denotes the Laplace convolution defined by 
\[
(u\ast m)(t) = \int_0^t u(t-s)m(s)\ds.
\]
It is clear that $g\in \Leb^1\Iopen$ since $\vnorma{g}_{\Leb^1\Iopen} = T^{1-\bar{\mu}}$.
Now, we will use Young's inequality for convolutions: 
\begin{equation}\label{eq:young_conv}
\vnorma{w_1 \ast w_2}_{\Leb^r\Iopen} \leqslant \vnorma{w_1}_{\Leb^p\Iopen} \vnorma{w_2}_{\Leb^q\Iopen},  
\end{equation}
where $\frac{1}{p} + \frac{1}{q} = \frac{1}{r}+1$ with $1 \leqslant p,q \leqslant r \leqslant \infty$. 
This fundamental inequality implies that 
\begin{multline*}
  \int_\Theta \int_0^T \abs{\rho(\X,t)\partial^{\mu(t)}_t \Phi(\X,t)}^2 \dt \dX \\
  \leqslant \max\{1,T\}^2 \vnorma{\rho}^2_{{\mathcal X}}  \int_\Theta \vnorma{g}^{2}_{\Leb^1\Iopen} \vnorma{\abs{\pdt \Phi(\X)}}^{2}_{\Leb^2\Iopen} \dX 
\end{multline*}
and thus 
\begin{equation}\label{eq:useful_estimate2}
\vnorma{\rho\partial^{\mu(t)}_t \Phi}_{\lpkIX{2}{\lp{2}}}\leqslant \max\{1,T\} T^{1-\mu}   \vnorma{\rho}_{{\mathcal X}}  \vnorma{\pdt \Phi}_{\lpkIX{2}{\lp{2}}}. 
\end{equation}
Keeping this result in mind, the variational formulation of  (\ref{eq:problem}-\ref{eq:ics_bcs}) can be formulated as:
\medskip
\begin{center}
Let AS-(1--4) be fulfilled. Find $\Phi \in \Cont\left([0,T],\lp{2}\right) \cap  \lpkIX{2}{\hko{1}}$ with $\pdt \Phi \in \lpkIX{2}{\lp{2}}$ and $\pdtt \Phi \in \lpkIX{2}{{\hko{1}}^*}$\\
such that for all $\chi \in \hko{1}$ and for a.a. $t \in (0,T)$ it holds that
 \begin{multline}\label{eq:weak_form_cont}
\langle \pdtt \Phi(t), \chi \rangle + \scal{\rho(t) \partial^{\mu(t)}_t \Phi(t)}{\chi} +  \scal{\beta(t)\nabla \Phi(t)}{\nabla \chi}\\
 = \scal{f\left(\Phi(t)\right)}{\chi} + \scal{Q(t)}{\chi}. 
 \end{multline}
\end{center}

\begin{remark}[Regularity on the solution: special case $\mu(t) = \mu$]
We employ the Fourier method to derive the formal solution to the following problem ($L>0$):
\begin{equation}\label{eq:problem_1D_forward}
 \left\{ \begin{array}{rlr}
  \Phi_{tt} (x,t) +  \partial^{\mu}_t \Phi(x,t) -  \Phi_{xx}(x,t) & =  0& \quad (x,t) \in (0,L)\times (0,T],\\
\Phi(0,t)=\Phi(L,t)& =0 & \quad t \in (0,T], \\
 \Phi(x,0) &  = \tilde \Phi_0(x) & \quad  x \in (0,L), \\
  \pdt \Phi(x,0) &  = \tilde\Psi_0(x) & \quad  x \in (0,L).
 \end{array} \right.
\end{equation}
Prescribing a particular solution of the form $\Phi(x,t) = X(x)T(t)$ leads to the following fractional differential equation 
\begin{equation}\label{eq:frac_diff}
 T^{\prime\prime}(t) + \left(\partial_t^{\mu} T  \right)(t) + \kappa T(t)  = 0, \quad t\in \Iopen,
\end{equation}
and the eigenvalue problem 
\begin{equation}\label{eq:eigenvalue_problem}
\begin{cases}
\left({\mathcal A}X\right)(x) =  -X^{\prime\prime}(x) = \kappa X(x) & x \in (0,L) \\
    X(0) = X(L) = 0, &  
    \end{cases}
\end{equation}
where $\kappa$ represents the separation constant. The solutions to \eqref{eq:eigenvalue_problem} are given by $\{\kappa_i,X_i\}$ for any $i\in\NN$, where $\kappa_i = \left(\frac{i\pi}{L}\right)^2$ and $X_i(x)= \sqrt{\frac{2}{L}}\sin\left(\frac{i\pi}{L}x\right)$. Now, we solve problem \eqref{eq:frac_diff} for $\kappa = \kappa_i$ by the Laplace transform method. We obtain that 
\[
{\mathcal L}\left[T_i(t)\right](z) = \frac{z+z^{\mu-1}}{z^2+z^\mu+\kappa_i} T_i(0) + \frac{1}{z^2+z^\mu+\kappa_i} T_i^\prime(0), \quad i \in \NN. 
\]
Hence, equation~\eqref{eq:frac_diff} for $\kappa = \kappa_i$ has two fundamental solutions given by (see e.g. \cite[Lemma~5]{Kazem2013} or \cite[Example~5.19]{Kilbas2006})
\begin{multline*}
   T^1_i(t) =  \sum_{k_1=0}^\infty \sum_{k_2=0}^\infty  \frac{(-\kappa_i)^{k_1} (-1)^{k_2} \binom{k_1+k_2}{k_2}}{\Gamma(k_2 (2-\mu) + 2k_1+1 )} t^{k_2 (2-\mu) + 2k_1} \\
   + t^{2-\mu} \sum_{k_1=0}^\infty \sum_{k_2=0}^\infty  \frac{(-\kappa_i)^{k_1} (-1)^{k_2} \binom{k_1+k_2}{k_2}}{\Gamma(k_2 (2-\mu) + 2k_1+3-\mu )} t^{k_2 (2-\mu) + 2k_1}
\end{multline*}
and
\[
    T^2_i(t) = t \sum_{k_1=0}^\infty \sum_{k_2=0}^\infty  \frac{(-\kappa_i)^{k_1} (-1)^{k_2} \binom{k_1+k_2}{k_2}}{\Gamma(k_2 (2-\mu) + 2k_1+2 )} t^{k_2 (2-\mu) + 2k_1}. 
\]
The multinomial Mittag-Leffler function is defined by \cite{Hadid1996}
\begin{equation} \label{eq:MML}
E_{(\alpha_1, \dots, \alpha_m),\mu}(z_1, \dots, z_m) = \sum_{k=0}^\infty \sum_{\substack{ k_1+\dots + k_m = k\\ k_j\geq 0}} \binom{k}{k_1,\dots, k_m} \frac{\prod_{j=1}^m z_j^{k_j}}{ \Gamma(\mu + \sum_{j=1}^m \alpha_j k_j)},
\end{equation} 
where $\binom{k}{k_1,\dots,k_m}$ is the multinomial coefficient. Hence, $T^1_i$ and $T^2_i$ can be rewritten in the following convenient forms 
\begin{align*}
  T^1_i(t) &=    E_{(2,2-\mu),1} \left(-\kappa_i t^2, -t^{2-\mu}\right) + t^{2-\mu} E_{(2,2-\mu),3-\mu} \left(-\kappa_i t^2, -t^{2-\mu}\right), \\
  T^2_i(t) &= t E_{(2,2-\mu),2} \left(-\kappa_i t^2, -t^{2-\mu}\right). 
\end{align*}
Using \cite[Lemma~3.2]{MaesVanBockstal}, we obtain that 
\[
 T^1_i(t) = 1 - \kappa_i t^2  E_{(2,2-\mu),3} \left(-\kappa_i t^2, -t^{2-\mu}\right).
\]
The formal solution to \eqref{eq:problem_1D_forward} is given by 
\[\Phi(x,t)= \sum_{j=1}^\infty  X_j(x) \left[\scal{\tilde{u}_0}{X_j} T^1_j(t) + \scal{\tilde\Psi_0}{X_j} T^2_j(t)\right].\]
Employing \cite[Lemma~3.3]{MaesVanBockstal}, there exist positive constants $M_1$ and $M_2$ such that
\[
\max_{t\in \I}\abs{T^i_j(t)} \leqslant M_i, \quad \forall j\in\NN, \quad i=1,2. 
\]
Hence, we have for all $t\in\I$ that
\begin{align*}
\vnorma{\Phi(t)}^2 &=  \sum_{j=1}^{\infty} \abs{(\tilde{u}_0,X_j)T_j^1(t) + (\tilde\Psi_0,X_j) T_j^2(t)}^2    \\
& \leqslant 2\max\{M_1^2,M_2^2\} \left(\vnorma{\tilde{u}_0}^2 + \vnorma{\tilde\Psi_0}^2\right).
\end{align*}
Utilising \cite[Lemma~3.1]{MaesVanBockstal} and \cite[Lemma~3.2]{MaesVanBockstal}, we further deduce that 
\begin{align*}
    \frac{\didiff}{\dt} T^1_i(t) &=  - \kappa_i t E_{(2,2-\mu),2} \left(-\kappa_i t^2, -t^{2-\mu}\right), \\
     \frac{\didiff^2}{\dt^2} T^1_i(t) &= - \kappa_i  E_{(2,2-\mu),1} \left(-\kappa_i t^2, -t^{2-\mu}\right), \\
      \frac{\didiff}{\dt} T^2_i(t) &=  E_{(2,2-\mu),1} \left(-\kappa_i t^2, -t^{2-\mu}\right), \\
       \frac{\didiff^2}{\dt^2} T^2_i(t) &= t^{-1} E_{(2,2-\mu),0} \left(-\kappa_i t^2, -t^{2-\mu}\right) \\
       &= -\kappa_i t E_{(2,2-\mu),2} \left(-\kappa_i t^2, -t^{2-\mu}\right) - t^{1-\mu} E_{(2,2-\mu),2-\mu} \left(-\kappa_i t^2, -t^{2-\mu}\right).
\end{align*}
Hence, applying \cite[Lemma~3.3]{MaesVanBockstal}, we have the existence of positive constants $\tilde{M}_i$ and $\bar{M}_i$ such that 
\[
\abs{\frac{\didiff}{\dt} T^1_i(t)} \leqslant \kappa_i^\half \tilde{M}_1 \quad \text{ and }\quad  \abs{\frac{\didiff}{\dt}  T^2_i(t)} \leqslant  \tilde{M}_2, \quad \forall i \in \NN,\; t\geqslant 0
\]
and
\[
\abs{\frac{\didiff^2}{\dt^2} T^1_i(t)} \leqslant \kappa_i \bar{M}_1 \quad \text{ and }\quad  \abs{\frac{\didiff^2}{\dt^2}  T^2_i(t)} \leqslant (1+\kappa_i^{\half}) \bar{M}_2, \quad \forall i \in \NN,\; t\geqslant 0. 
\]
Therefore, we obtain for all $t\in\I$ that 
\begin{align*}
\vnorma{\pdt \Phi(t)}^2 &=  \sum_{j=1}^{\infty} \abs{(\tilde{\Phi}_0,X_j)\left(T^1_j\right)^\prime(t) + (\tilde\Psi_0,X_j) \left(T^2_j\right)^\prime(t)}^2    \\
& \leqslant 2 \tilde{M}_1^2 \sum_{j=1}^{\infty}  \kappa_j \abs{(\tilde{\Phi}_0, X_j)}^2 + 2 \tilde{M}_2^2 \sum_{j=1}^\infty\abs{(\tilde\Psi_0,X_j)}^2 \\
&\leqslant 2\max\{\tilde{M}_1^2,\tilde{M}_2^2\} \left(   \vnorma{\tilde{\Phi}_0}_{\Hi^1_0(0,L)}^2+ \vnorma{\tilde\Psi_0}^2 \right)
\end{align*}
and 
\begin{align*}
\vnorma{\pdtt \Phi(t)}^2 &=  \sum_{j=1}^{\infty} \abs{(\tilde{\Phi}_0,X_j)\left(T^1_j\right)^{\prime\prime}(t) + (\tilde\Psi_0,X_j) \left(T^2_j\right)^{\prime\prime}(t)}^2    \\
& \leqslant 2 \bar{M}_1^2 \sum_{j=1}^{\infty}  \kappa_j^2 \abs{(\tilde{\Phi}_0, X_j)}^2 + 2 \bar{M}_2^2 \sum_{j=1}^\infty \kappa_j\abs{(\tilde\Psi_0,X_j)}^2 \\
&\leqslant 2\max\{\bar{M}_1^2,\bar{M}_2^2\} \left(   \vnorma{\tilde{\Phi}_0}_{\Hi^2(0,L) \cap \Hi^1_0(0,L)}^2+ \vnorma{\tilde\Psi_0}_{\Hi^1_0(0,L)}^2 \right).  
\end{align*}
Since $X_j^\prime(x) = \kappa_j^{\half} \sqrt{\frac{2}{L}}\cos\left(\frac{j\pi}{L}x\right) $ for any $j\in \NN$, we have for all $t\in\I$ that 
\begin{align*}
\vnorma{\partial^2_{tx} \Phi(t)}^2 &= 
\sum_{j=1}^{\infty} \kappa_j \abs{(\tilde{\Phi}_0,X_j)\left(T^1_j\right)^\prime(t) + (\tilde\Psi_0,X_j) \left(T^2_j\right)^\prime(t)}^2  \\
& \leqslant 2 \tilde{M}_1^2 \sum_{j=1}^{\infty}  \kappa_j^2 \abs{(\tilde{\Phi}_0, X_j)}^2 + 2 \tilde{M}_2^2 \sum_{j=1}^\infty \kappa_j\abs{(\tilde\Psi_0,X_j)}^2 \\
&\leqslant 2\max\{\tilde{M}_1^2,\tilde{M}_2^2\} \left(   \vnorma{\tilde{\Phi}_0}_{\Hi^2(0,L) \cap \Hi^1_0(0,L)}^2+ \vnorma{\tilde\Psi_0}_{\Hi^1_0(0,L)}^2 \right), 
\end{align*}
i.e. $\pdt \Phi \in \lpkIX{\infty}{\hko{1}}$ if $\tilde{\Phi}_0\in \Hi^2(0,L) \cap \Hi^1_0(0,L)$ and $\tilde\Psi_0\in \Hi^1_0(0,L)$. 
\end{remark}

\subsection{Uniqueness of a solution}

In the following theorem, we discuss the uniqueness of a solution under the extra assumption that $\nabla \pdt \Phi \in \lpkIX{2}{\Lp{2}}$. Consequently, as\\ $\pdtt \Phi \in \lpkIX{2}{{\hko{1}}^*}$, we have as well that $\pdt \Phi \in \cIX{\lp{2}}$, see e.g. \cite[Lemma~7.3]{Roubicek2005}.

\begin{theorem}[Uniqueness of a weak solution]\label{thm:uniqueness1}
There exists at most one solution to problem \eqref{eq:weak_form_cont} fulfilling $\Phi \in \Cont\left([0,T],\lp{2}\right) \cap  \lpkIX{2}{\hko{1}}$ with $\pdt \Phi \in \lpkIX{2}{\hko{1}} \cap \cIX{\lp{2}}$ and $\pdtt \Phi \in \lpkIX{2}{{\hko{1}}^*}$. 
\end{theorem}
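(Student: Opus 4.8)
The plan is to use the classical energy method combined with Gr\"onwall's lemma. Suppose $\Phi_1$ and $\Phi_2$ are two solutions with the stated regularity and set $u:=\Phi_1-\Phi_2$. Subtracting the two instances of \eqref{eq:weak_form_cont} shows that, for all $\chi\in\hko{1}$ and a.a.\ $s\in\Iopen$,
\[
\langle \pdtt u(s),\chi\rangle + \scal{\rho(s)\partial^{\mu(s)}_s u(s)}{\chi} + \scal{\beta(s)\nabla u(s)}{\nabla\chi} = \scal{f(\Phi_1(s))-f(\Phi_2(s))}{\chi},
\]
together with the homogeneous data $u(0)=0$ and $\pdt u(0)=0$. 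Because the extra assumption $\nabla\pdt\Phi\in\lpkIX{2}{\Lp{2}}$ guarantees $\pdt u(s)\in\hko{1}$ for a.a.\ $s$, the natural choice is the test function $\chi=\pdt u(s)$; integrating the resulting identity over $(0,t)$ for an arbitrary $t\in\I$ should lead to a Gr\"onwall inequality for the energy $E(t):=\vnorma{\pdt u(t)}^2+\vnorma{\nabla u(t)}^2$.

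Next I would handle the inertial, diffusion and source terms. For the inertial term, the regularity $\pdt u\in\lpkIX{2}{\hko{1}}$ with $\pdtt u\in\lpkIX{2}{{\hko{1}}^*}$ yields $\pdt u\in\cIX{\lp{2}}$ (as noted just before the statement) and the identity $\int_0^t\langle\pdtt u(s),\pdt u(s)\rangle\ds=\tfrac{1}{2}\vnorma{\pdt u(t)}^2$, using $\pdt u(0)=0$. For the diffusion term, assumption (AS6) lets me write $\scal{\beta(s)\nabla u(s)}{\nabla\pdt u(s)}=\tfrac{1}{2}\frac{d}{ds}\scal{\beta(s)\nabla u(s)}{\nabla u(s)}-\tfrac{1}{2}\scal{\pdt\beta(s)\nabla u(s)}{\nabla u(s)}$, so that after integration the leading part contributes $\tfrac{1}{2}\scal{\beta(t)\nabla u(t)}{\nabla u(t)}\geq\tfrac{\tilde{\beta}_0}{2}\vnorma{\nabla u(t)}^2$ by the coercivity (AS5), $\tilde{\beta}_0>0$, while the correction is controlled by $\tfrac{1}{2}\vnorma{\pdt\beta}_{\mathcal X}\int_0^t\vnorma{\nabla u(s)}^2\ds$. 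The source term is handled by the Lipschitz bound (AS4), $\vnorma{f(\Phi_1(s))-f(\Phi_2(s))}\leq L_f\vnorma{u(s)}$, followed by Young's inequality and the Poincar\'e estimate $\vnorma{u(s)}\leq C_P\vnorma{\nabla u(s)}$ (valid since $u(s)\in\hko{1}$), which produces a bound of the form $C\int_0^t E(s)\ds$.

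The step I expect to be the main obstacle is the variable-order damping term $\int_0^t\scal{\rho(s)\partial^{\mu(s)}_s u(s)}{\pdt u(s)}\ds$: for a genuinely variable order the operator $\partial^{\mu(s)}_s$ is \emph{not} a Laplace convolution, so the positive-definiteness that one exploits in the constant-order case is not available. My plan is to avoid positivity altogether and instead estimate the term in absolute value, reusing the mechanism behind \eqref{eq:useful_estimate2}. Pointwise in $\X$ one has $\abs{\partial^{\mu(s)}_s u(\X,s)}\leq\max\{T,1\}\,(g\ast\abs{\pdt u(\X)})(s)$ by \eqref{eq:useful_estimate} and $\Gamma(1-\mu(s))\geq1$; Minkowski's integral inequality then gives $\vnorma{\partial^{\mu(s)}_s u(s)}\leq\max\{T,1\}\int_0^s g(s-r)\vnorma{\pdt u(r)}\dr$. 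Combining this with $\rho\in{\mathcal X}$, the Cauchy--Schwarz inequality in time and Young's convolution inequality \eqref{eq:young_conv} (with $g\in\Leb^1\Iopen$) bounds the whole damping contribution by $C\int_0^t\vnorma{\pdt u(s)}^2\ds\leq C\int_0^t E(s)\ds$.

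Collecting these estimates gives $\tfrac{1}{2}\min\{1,\tilde{\beta}_0\}\,E(t)\leq C\int_0^t E(s)\ds$ for every $t\in\I$. Gr\"onwall's lemma then forces $E\equiv0$, whence $\pdt u\equiv0$ and $\nabla u\equiv0$; together with $u(0)=0$ and the homogeneous boundary condition this yields $u\equiv0$, i.e.\ $\Phi_1=\Phi_2$, which is the claim.
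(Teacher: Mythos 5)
Your proposal is correct and follows essentially the same route as the paper: test the difference equation with $\pdt u$, integrate in time, bound the variable-order damping term in absolute value via the kernel estimate \eqref{eq:useful_estimate} and Young's convolution inequality (this is exactly the mechanism of \eqref{eq:useful_estimate2}), handle the diffusion term with the $\beta^{1/2}$ splitting under (AS5)--(AS6), and close with Gr\"onwall. The only cosmetic difference is that you control $\vnorma{u(s)}$ in the source term by Poincar\'e, whereas the paper uses $u(s)=\int_0^s\pdt u(r)\dr$; both yield the same Gr\"onwall inequality.
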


\begin{proof}
Let $\Phi_1$ and $\Phi_2$ be two solutions to problem \eqref{eq:weak_form_cont}. Then, the difference $\Phi=\Phi_1-\Phi_2$ satisfies $\Phi(\cdot, 0)=\pdt \Phi(\cdot,0)=0$ in $\Theta$. Now, we subtract \eqref{eq:weak_form_cont} with $\Phi=\Phi_2$ from \eqref{eq:weak_form_cont} with $\Phi=\Phi_1$, take $\chi = \pdt \Phi(t)$ and integrate over $(0,\theta)\subset \Iopen$ to obtain that 
\begin{multline*}
\half \vnorma{\pdt \Phi(\theta)}^2 + \int_0^\theta  \scal{\beta(t)\nabla \Phi(t)}{\nabla \pdt \Phi(t)} \dt \\ = - \int_0^\theta \scal{\rho(t) \partial^{\mu(t)}_t \Phi(t)}{\pdt \Phi(t)}\dt + \int_0^\theta \scal{f\left(\Phi_1(t)\right)-f\left(\Phi_2(t)\right)}{\pdt \Phi(t)} \dt. 
\end{multline*}
Using estimate \eqref{eq:useful_estimate2}, we obtain that 
\begin{multline*}
    \abs{ \int_0^\theta \scal{\rho(t) \partial^{\mu(t)}_t \Phi(t)}{\pdt \Phi(t)}\dt } \\
 \leqslant \left(\half + \half \max\{1,T\}^2 T^{2-2\mu}  \vnorma{\rho}^2_{{\mathcal X}} \right) \int_0^\theta  \vnorma{\pdt \Phi(s)}^2 \ds.
\end{multline*} 
Applying $\beta = \beta^\half \beta^\half$, AS-5 and AS-6, we get that 
\begin{align*}
   & \int_0^\theta  \scal{\beta(t)\nabla \Phi(t)}{\nabla \pdt \Phi(t)} \dt \\
   &=  \int_0^\theta  \scal{\beta^{1/2}(t)\nabla \Phi(t)}{\pdt \left(\beta^{1/2}(t) \nabla \Phi(t)\right) - \pdt \beta^{1/2}(t) \nabla \Phi(t) } \dt \\
   &\geqslant \frac{\tilde{\beta}_0}{2} \vnorma{\nabla \Phi(\theta)}^2 - \frac{\vnorma{\pdt \beta}_{{\mathcal X}}}{2} \int_0^\theta \vnorma{\Phi(t)}_{\hk{1}}^2 \dt. 
\end{align*}
By the Young inequality, AS-4 and $\Phi(t) = \int_0^t \pdt \Phi(s)\ds$, we have that 
\[
\abs{ \int_0^\theta \scal{f\left(\Phi_1(t)\right)-f\left(\Phi_2(t)\right)}{\pdt \Phi(t)} \dt } 
\leqslant \frac{1+L_f^2 T^2}{2}  \int_0^\theta \vnorma{\pdt \Phi(s)}^2 \ds. 
\]
Collecting these estimates, we obtain that
\begin{multline*}
   \half \vnorma{\pdt \Phi(\theta)}^2  +  \frac{\tilde{\beta}_0}{2} \vnorma{\nabla \Phi(\theta)}^2
    \leqslant   \frac{\vnorma{\pdt \beta}_{{\mathcal X}}}{2} \int_0^\theta \vnorma{\Phi(t)}_{\hk{1}}^2 \dt\\ + 
    \left(1+ \frac{L_f^2 T^2}{2} + \half \max\{1,T\}^2 T^{2-2\mu}  \vnorma{\rho}^2_{{\mathcal X}}\right) \int_0^\theta \vnorma{\pdt \Phi(s)}^2 \ds. 
\end{multline*}
Therefore, an application of the Gr\"onwall lemma gives that $\vnorma{\pdt \Phi(\theta)}^2=0$ and thus $\Phi=0$ a.e. in $\Theta \times \Iopen.$
\end{proof}

\section{Time discretization}
\label{sec:time_discretization}

We consider an equidistant partition of the time interval $\I$ into $n$ intervals with length $\tau = \frac{T}{n}<1$. The solution at time $t_i$ is given by $\Phi_i$, whilst the first derivative $\pdt \Phi(t_i)$ can be  approximated by the  Euler backward difference: 
\[\pdt \Phi(t_i) \approx \delta \Phi_i := 
\frac{\Phi_i-\Phi_{i-1}}{\tau}.\]
Additionally, the V-OF Caputo operator $\frac {\partial ^{\mu(t)} \Phi(t)} {\partial t ^{\mu(t)}}$ at time $t_i$ is approximated by 
\begin{multline}
 \left. \frac {\partial ^{\mu(t)} \Phi(t)} {\partial t ^{\mu(t)}} \right\rvert_{t = t_k} =  \sum_{q=1}^k \int_{t_{q-1}}^{t_q} \frac{(t_k-r)^{-\mu_k}}{\Gamma\left(1-\mu_k\right)}\pdt \Phi(r)\dr \\
\approx D_{\tau}^{\mu_k}\Phi_k  := \sum_{q=1}^k a_q^k\delta \Phi_q \tau = \sum_{q=0}^k b^k_q \Phi_q,\
\end{multline}
where 
\begin{multline*}
    a_q^k=\frac{(t_k-t_{q-1})^{-\mu_k}}{\Gamma\left(1-\mu_k\right)},\\ b _0^{k} = -a^{k} _1, \quad b^{k} _k =  a^{k} _{k }, \quad b^{k} _{q} = a^{k} _{q} - a^{k} _{q + 1},\, \text{for } \,q = 1 , \ldots , k - 1.
\end{multline*}
Using these approximations, the problem \eqref{eq:weak_form_cont} is approximated at time $t_i$ as follows: 
\begin{center}
Given $\Phi_0:=\tilde{\Phi}_0$ and $\delta \Phi_0 := \tilde\Psi_0$. Find $\Phi_i \in \hko{1}$ such that 
\begin{multline}\label{eq:var_for_discrete}
 \scal{\delta^2 \Phi_i}{\chi} + \scal{\rho_i D_{\tau}^{\mu_i}\Phi_i}{\chi}  +  \scal{\beta_i \nabla \Phi_i}{\nabla \chi} \\
 = \scal{f\left(\Phi_{i-1}\right)}{\chi} + \scal{Q_i}{\chi}, \quad \forall \chi \in \hko{1}.
\end{multline}
\end{center}
It is equivalent to  solving
\begin{align*}\label{eq:var_for_discrete2}
a_i(\Phi_i,\chi)=\langle l_i,\chi\rangle, \quad \forall \chi \in \hko{1},
\end{align*}
where
$$
a_i(\Phi_i,\chi) := \tau^{-2} \scal{\Phi_i}{\chi} + \frac{\tau^{-\mu_i}}{\Gamma(1-\mu_i)} \scal{\rho_i \Phi_i}{\chi}  +  \scal{\beta_i \nabla \Phi_i}{\nabla \chi}
$$
and 
\begin{multline*}
\langle l_i,\chi\rangle := \scal{f\left(\Phi_{i-1}\right)}{\chi} + \scal{Q_i}{\chi} + \tau^{-2} \scal{\Phi_{i-1}}{\chi} + \tau^{-1} \scal{\delta \Phi_{i-1}}{\chi} \\
+ \frac{\tau^{-\mu_i}}{\Gamma(1-\mu_i)} \scal{\rho_i \Phi_{i-1}}{\chi} 
- \sum_{l=1}^{i-1}  \frac{(t_i-t_{l-1})^{-\mu_i}}{\Gamma\left(1-\mu_i\right)} \scal{\rho_i\delta \Phi_l}{\chi} \tau. 
\end{multline*}

In the next lemma, we establish the existence of a unique solution to \eqref{eq:var_for_discrete}.

\begin{lemma}
\label{lemnn}
Let the assumptions AS-(1--8) be fulfilled. 
Then, for any $i=1,2,\ldots,n$, there exists a unique $\Phi_i\in \hko{1}$ solving \eqref{eq:var_for_discrete}.
\end{lemma}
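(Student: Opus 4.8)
The plan is to argue by induction on $i$ and reduce the solvability of \eqref{eq:var_for_discrete} to a single application of the Lax--Milgram lemma on the Hilbert space $\hko{1}$. For the base case, $\Phi_0 = \tilde{\Phi}_0 \in \hko{1}$ and $\delta \Phi_0 = \tilde\Psi_0 \in \lp{2}$ are prescribed by AS-8. Assuming $\Phi_0, \dots, \Phi_{i-1} \in \hko{1}$ have already been constructed, the previous iterates $\Phi_0,\dots,\Phi_{i-1}$ and the differences $\delta \Phi_1,\dots,\delta \Phi_{i-1}$ enter \eqref{eq:var_for_discrete} only through the right-hand side $\langle l_i,\chi\rangle$, so that \eqref{eq:var_for_discrete} becomes the linear problem $a_i(\Phi_i,\chi) = \langle l_i,\chi\rangle$ for the single unknown $\Phi_i$.

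Next I would verify the three hypotheses of Lax--Milgram. For \emph{boundedness} of $a_i$, the Cauchy--Schwarz inequality together with AS-2 ($\rho,\beta \in {\mathcal X} = \Leb^\infty(\Theta\times\Iopen)$) bounds each of the three terms of $a_i$ by a constant times $\vnorma{\Phi_i}_{\hko{1}}\vnorma{\chi}_{\hko{1}}$; here the factor $\frac{\tau^{-\mu_i}}{\Gamma(1-\mu_i)}$ is a finite positive number, since $0\leqslant\mu_i\leqslant\bar{\mu}<1$ gives $1-\mu_i\in(0,1]$ and hence $\Gamma(1-\mu_i)>0$. For \emph{coercivity}, testing with $\chi=\Phi_i$ yields
\begin{equation*}
a_i(\Phi_i,\Phi_i) = \tau^{-2}\vnorma{\Phi_i}^2 + \frac{\tau^{-\mu_i}}{\Gamma(1-\mu_i)}\int_\Theta \rho_i\abs{\Phi_i}^2\dX + \int_\Theta \beta_i\abs{\nabla\Phi_i}^2\dX.
\end{equation*}
The middle term is nonnegative by AS-7 ($\rho\geqslant 0$) and the positivity of the prefactor, while AS-5 gives $\int_\Theta \beta_i\abs{\nabla\Phi_i}^2\dX \geqslant \tilde{\beta}_0\vnorma{\nabla\Phi_i}^2$, so that
\begin{equation*}
a_i(\Phi_i,\Phi_i) \geqslant \tau^{-2}\vnorma{\Phi_i}^2 + \tilde{\beta}_0\vnorma{\nabla\Phi_i}^2 \geqslant \min\{\tau^{-2},\tilde{\beta}_0\}\,\vnorma{\Phi_i}^2_{\hko{1}},
\end{equation*}
which is a strictly positive multiple of $\vnorma{\Phi_i}^2_{\hko{1}}$ by AS-5.

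It then remains to check that $\langle l_i,\cdot\rangle$ is a bounded linear functional on $\hko{1}$. The term $\scal{f(\Phi_{i-1})}{\chi}$ is controlled because the Lipschitz bound AS-4 gives $\abs{f(z)}\leqslant \abs{f(0)} + L_f\abs{z}$, so $f(\Phi_{i-1})\in\lp{2}$ whenever $\Phi_{i-1}\in\lp{2}$; the term $\scal{Q_i}{\chi}$ is bounded by AS-3 ($Q\in\cIX{\lp{2}}$, hence $Q_i\in\lp{2}$); and the remaining contributions $\tau^{-2}\scal{\Phi_{i-1}}{\chi}$, $\tau^{-1}\scal{\delta\Phi_{i-1}}{\chi}$, $\frac{\tau^{-\mu_i}}{\Gamma(1-\mu_i)}\scal{\rho_i\Phi_{i-1}}{\chi}$ and the finite history sum $\sum_{l=1}^{i-1}a_l^i\scal{\rho_i\delta\Phi_l}{\chi}\tau$ are each bounded by a constant (depending on the already-known data, on $\tau$, and on $\vnorma{\rho}_{{\mathcal X}}$) times $\vnorma{\chi}$. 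In the base step $i=1$ the quantity $\delta\Phi_0=\tilde\Psi_0\in\lp{2}$ from AS-8 makes the $\tau^{-1}\scal{\delta\Phi_0}{\chi}$ term admissible. Lax--Milgram then furnishes a unique $\Phi_i\in\hko{1}$ satisfying $a_i(\Phi_i,\chi)=\langle l_i,\chi\rangle$ for all $\chi\in\hko{1}$, which closes the induction.

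I do not expect a genuine analytic obstacle here: the argument is a textbook Lax--Milgram application, and the only points requiring care are bookkeeping. The essential structural observation is that the backward discrete convolution $D_\tau^{\mu_i}\Phi_i$ contributes its \emph{diagonal} ($q=i$) part $\frac{\tau^{-\mu_i}}{\Gamma(1-\mu_i)}\rho_i\Phi_i$ to the left-hand bilinear form with the correct sign (nonnegative, by $\rho\geqslant 0$), while all earlier history terms are moved into the data $l_i$; together with the uniform ellipticity $\beta\geqslant\tilde{\beta}_0$ and the benign $\tau^{-2}$ reaction term, this secures coercivity irrespective of the memory structure.
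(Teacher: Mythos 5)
Your proposal is correct and follows essentially the same route as the paper: reduce each time step to the linear problem $a_i(\Phi_i,\chi)=\langle l_i,\chi\rangle$ and apply Lax--Milgram, with coercivity coming from AS-5, AS-7 and the nonnegativity of the diagonal part of the discrete convolution, and boundedness of $l_i$ from the previously computed iterates lying in $\lp{2}$. The only cosmetic difference is that the paper invokes Friedrichs' inequality for ellipticity, whereas you obtain coercivity directly from the combination of the $\tau^{-2}$ reaction term and the gradient term, which is equally valid.
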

\begin{proof}
The conditions of the Lax-Milgram lemma are satisfied assuming that $\tilde{\Phi}_0,\tilde\Psi_0\in\lp{2}$. The ellipticity of the bounded bilinear form $a_i$ follows from Friedrichs' inequality and AS-7. The linear form $l_i$ is bounded if $\delta \Phi_{i-1} \in \lp{2}$ and $\Phi_l \in \lp{2}$ for $l=0,\ldots,i-1$ since for all $\chi \in \hko{1}$ it holds that
\begin{align*}
\abs{\langle l_i,\chi\rangle}&\leqslant C  \vnorma{\chi}  + C(1+\tau^{-2}+\tau^{-\mu_i})\vnorma{\Phi_{i-1}}\vnorma{\chi} \\
& \qquad + \tau^{-1} \vnorma{\delta \Phi_{i-1}}\vnorma{\chi} + C \tau^{-\mu_i} \vnorma{\chi}\sum_{l=1}^{i-1} \vnorma{ \Phi_l- \Phi_{l-1}}  \\
& \leqslant C(\tau^{-2}) \vnorma{\chi}_{\hk{1}}.
\end{align*}
\end{proof}

Next, we derive some stability estimates associated with the time-discrete solutions. 

\begin{lemma}\label{lem:est1} 
Let AS-(1--8) be satisfied. Then, there exist positive constants $C>0$ and $\tau_0>0$ such that $\forall j=1,2,\ldots,n$ and $\tau < \tau_0$ it holds that
$$
   \vnorma{\delta \Phi_j}^2 
      +  \sum_{i=1}^j \vnorma{\delta \Phi_i -\delta \Phi_{i-1}}^2 
      +  \vnorma{\nabla \Phi_j}^2  +  \sum_{i=1}^j \vnorma{\nabla \Phi_i - \nabla \Phi_{i-1}}^2 \\
      \leqslant  C.
$$
\end{lemma}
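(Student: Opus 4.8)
The plan is to mirror, at the discrete level, the energy argument from the uniqueness proof of Theorem~\ref{thm:uniqueness1}: test the variational identity \eqref{eq:var_for_discrete} with $\chi = \delta \Phi_i \in \hko{1}$, multiply by $\tau$, and sum over $i = 1, \ldots, j$. The inertial term is $\scal{\delta^2 \Phi_i}{\delta \Phi_i}\tau = \scal{\delta\Phi_i - \delta\Phi_{i-1}}{\delta\Phi_i}$, which I would handle with the elementary identity $2\scal{a-b}{a} = \vnorma{a}^2 - \vnorma{b}^2 + \vnorma{a-b}^2$; summing telescopes this to $\frac12 \vnorma{\delta\Phi_j}^2 - \frac12\vnorma{\tilde\Psi_0}^2 + \frac12\sum_{i=1}^j \vnorma{\delta\Phi_i - \delta\Phi_{i-1}}^2$, producing the first two quantities on the left-hand side of the claim (the data term being controlled by AS-8). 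The remaining contributions split into a structural diffusion part, a damping part, and lower-order source terms.

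For the diffusion term $\scal{\beta_i\nabla\Phi_i}{\nabla\Phi_i - \nabla\Phi_{i-1}}$ I would use the scalar analogue of the continuous product-rule trick, $2\scal{\beta_i \nabla\Phi_i}{\nabla\Phi_i - \nabla\Phi_{i-1}} = \scal{\beta_i\nabla\Phi_i}{\nabla\Phi_i} - \scal{\beta_i\nabla\Phi_{i-1}}{\nabla\Phi_{i-1}} + \scal{\beta_i(\nabla\Phi_i - \nabla\Phi_{i-1})}{\nabla\Phi_i - \nabla\Phi_{i-1}}$. By AS-5 the last term dominates $\tilde\beta_0 \vnorma{\nabla\Phi_i - \nabla\Phi_{i-1}}^2$, giving the fourth quantity of the statement, while Abel summation of the first two terms, after inserting $\beta_{i-1}$, yields $\scal{\beta_j\nabla\Phi_j}{\nabla\Phi_j} \geqslant \tilde\beta_0\vnorma{\nabla\Phi_j}^2$ (the third quantity) minus the data contribution $\scal{\beta_0\nabla\tilde\Phi_0}{\nabla\tilde\Phi_0}$ and minus $\sum_{i=1}^j \scal{(\beta_i - \beta_{i-1})\nabla\Phi_{i-1}}{\nabla\Phi_{i-1}}$; since $\abs{\beta_i - \beta_{i-1}} \leqslant \tau\vnorma{\pdt\beta}_{{\mathcal X}}$ by AS-6, the latter is controlled by $\vnorma{\pdt\beta}_{{\mathcal X}}\sum_{i=1}^j \vnorma{\nabla\Phi_{i-1}}^2\tau$, a quantity amenable to a discrete Gr\"onwall argument. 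The source terms $\scal{f(\Phi_{i-1})}{\delta\Phi_i}\tau + \scal{Q_i}{\delta\Phi_i}\tau$ are treated by Cauchy--Schwarz and Young: with the Lipschitz bound $\vnorma{f(\Phi_{i-1})} \leqslant \vnorma{f(0)}_{\lp{2}} + L_f \vnorma{\Phi_{i-1}}$ from AS-4, the bound $\vnorma{Q_i} \leqslant \max_{t\in\I}\vnorma{Q(t)}$ from AS-3, and Friedrichs' inequality to pass from $\vnorma{\Phi_{i-1}}$ to $\vnorma{\nabla\Phi_{i-1}}$, these contribute at most $C + C\sum_{i=1}^j(\vnorma{\delta\Phi_i}^2 + \vnorma{\nabla\Phi_i}^2)\tau$.

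The crux, and the step I expect to be the main obstacle, is the damping contribution $\sum_{i=1}^j \scal{\rho_i D_\tau^{\mu_i}\Phi_i}{\delta\Phi_i}\tau$, for which I would establish a discrete counterpart of \eqref{eq:useful_estimate2}. Writing $a_q^i \leqslant \max\{T,1\}(t_i - t_{q-1})^{-\bar\mu}$---valid because $\Gamma(1-\mu_i)\geqslant 1$ and $(t_i - t_{q-1})^{\bar\mu - \mu_i}\leqslant \max\{T,1\}$ by \eqref{eq:useful_estimate}---exhibits $D_\tau^{\mu_i}\Phi_i$ as a discrete Laplace convolution of the kernel values $g(t_i - t_{q-1}) = (t_i - t_{q-1})^{-\bar\mu}$ against $\delta\Phi_q$, whose $\ell^1$-in-time mass $\tau\sum_{m\geqslant 1} (m\tau)^{-\bar\mu}$ is bounded uniformly in $\tau$ by $\frac{T^{1-\bar\mu}}{1-\bar\mu}$. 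A discrete Young inequality for convolutions then yields $\sum_{i=1}^j \vnorma{\rho_i D_\tau^{\mu_i}\Phi_i}^2\tau \leqslant C\vnorma{\rho}_{{\mathcal X}}^2 \sum_{i=1}^j \vnorma{\delta\Phi_i}^2\tau$, and a further Young step bounds the damping contribution by $C\sum_{i=1}^j \vnorma{\delta\Phi_i}^2\tau$. Alternatively, one may retain the nonnegative diagonal part, which is $\geqslant 0$ by AS-7, on the left.

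Collecting all estimates, the left-hand side carries the four nonnegative quantities of the statement, while the right-hand side is bounded by a data constant plus $C\sum_{i=1}^j(\vnorma{\delta\Phi_i}^2 + \vnorma{\nabla\Phi_i}^2)\tau$. The $i=j$ summand $C\tau(\vnorma{\delta\Phi_j}^2 + \vnorma{\nabla\Phi_j}^2)$ is then absorbed into the leading terms $\frac12\vnorma{\delta\Phi_j}^2 + \frac{\tilde\beta_0}{2}\vnorma{\nabla\Phi_j}^2$, which forces the smallness condition $\tau < \tau_0$ with $\tau_0 := \min\{1/(4C), \tilde\beta_0/(4C)\}$; this is precisely where $\tau_0$ enters. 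After this absorption, the discrete Gr\"onwall lemma applied to $j\mapsto \vnorma{\delta\Phi_j}^2 + \vnorma{\nabla\Phi_j}^2$ delivers a bound independent of $j$, $n$, and $\tau$, which simultaneously bounds the two increment sums appearing on the left, completing the proof.
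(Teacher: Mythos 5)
Your proposal is correct and follows essentially the same route as the paper: testing with $\chi=\delta\Phi_i\tau$, Abel summation for the inertial and diffusion terms (using AS-5, AS-6), Young's inequality for the source terms, the kernel bound \eqref{eq:useful_estimate} combined with the discrete Young inequality for convolutions \eqref{eq:young_conv_discrete} for the damping term, absorption of the $i=j$ contribution for $\tau<\tau_0$, and the discrete Gr\"onwall lemma. The only cosmetic differences (Friedrichs' inequality versus $\Phi_{i-1}=\tilde\Phi_0+\sum_{l}\delta\Phi_l\tau$ for the nonlinear source, and the constant $T^{1-\bar\mu}/(1-\bar\mu)$ for the discrete $\ell^1$ kernel mass) do not change the argument.
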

\begin{proof}
We start with putting $\chi=\delta \Phi_i\tau$ in \eqref{eq:var_for_discrete} and summing up the outcome for $i=1,\ldots,j$ with $1\leqslant j \leqslant n$, i.e. 
 \begin{multline*}
\sum_{i=1}^j \scal{\delta^2 \Phi_i}{\delta \Phi_i} \tau +  \sum_{i=1}^j \scal{ \rho_i \sum_{l=1}^i \frac{(t_i-t_{l-1})^{-\mu_i}}{\Gamma\left(1-\mu_i\right)}\delta \Phi_l \tau}{\delta \Phi_i} \tau +   \sum_{i=1}^j \scal{\beta _i\nabla \Phi_i}{\nabla \delta \Phi_i}\tau \\
 = \sum_{i=1}^j \scal{f\left(\Phi_{i-1}\right) }{\delta \Phi_i}\tau +  \sum_{i=1}^j \scal{Q_i}{\delta \Phi_i} \tau. 
 \end{multline*}
 Abel's summation rule implies that
 \[
\sum_{i=1}^j \scal{\delta^2 \Phi_i}{\delta \Phi_i} \tau = \half \vnorma{\delta \Phi_j}^2 - \half \vnorma{\tilde\Psi_0}^2 + \half \sum_{i=1}^j \vnorma{\delta \Phi_i -\delta \Phi_{i-1}}^2. 
\]
Moreover, from
\begin{multline*}
  \sum_{i=1}^j  \scal{\beta_i \nabla \Phi_i}{\nabla \delta \Phi_i} \tau 
    = \half \scal{\beta_j \nabla \Phi_j}{\nabla \Phi_j}
    - \half \scal{\beta_0 \nabla \tilde{\Phi}_0}{\nabla  \tilde{\Phi}_0}\\
    + \half \sum_{i=1}^j \scal{\beta_i (\nabla \Phi_i -  \nabla \Phi_{i-1})}{\nabla \Phi_i - \nabla \Phi_{i-1}}
   - \half \sum_{i=1}^j \scal{\delta\beta_i \nabla \Phi_{i-1}}{\nabla \Phi_{i-1}} \tau,
\end{multline*}
we have that
\begin{multline*}
  \sum_{i=1}^j  \scal{\beta_i \nabla \Phi_i}{\nabla \delta \Phi_i} \tau 
  \geqslant \frac{\tilde{\beta}_0}{2}  \vnorma{\nabla \Phi_j}^2 - \frac{\vnorma{ \beta}_{{\mathcal X}}}{2} \vnorma{\nabla \tilde{\Phi}_0}^2 \\
							   + \frac{\tilde{\beta}_0}{2} \sum_{i=1}^j \vnorma{\nabla \Phi_i - \nabla \Phi_{i-1}}^2 -\frac{\vnorma{\pdt \beta}_{{\mathcal X}}}{2}  \sum_{i=1}^{j-1} \vnorma{\nabla \Phi_{i}}^2 \tau.
\end{multline*}
Next, Young's inequality implies that 
\[
\left\lvert\sum_{i=1}^j \scal{Q_i}{\delta \Phi_i} \tau \right\rvert  \leqslant C + \half \sum_{i=1}^j \vnorma{\delta \Phi_i}^2 \tau.
\]
Using AS-4 and $\Phi_i = \tilde{\Phi}_0 + \sum_{l=1}^i \delta \Phi_l \tau$,  we see that
\begin{align*}
   \left\lvert  \sum_{i=1}^j \scal{f\left(\Phi_{i-1}\right) }{\delta \Phi_i}\tau \right\rvert 
   & \leqslant \half  \sum_{i=1}^j \vnorma{\delta \Phi_i}^2 \tau +  C\sum_{i=1}^j \left(1 + \vnorma{\Phi_{i-1}}^2  \right) \tau \\
   & \leqslant  C  + C  \sum_{i=1}^j \vnorma{\delta \Phi_i}^2 \tau. 
\end{align*}
Moreover, using \eqref{eq:useful_estimate}, we have that 
\begin{multline*}
  \left\lvert  \sum_{i=1}^j \scal{ \rho_i \sum_{l=1}^i \frac{(t_i-t_{l-1})^{-\mu_i}}{\Gamma\left(1-\mu_i\right)}\delta \Phi_l \tau}{\delta \Phi_i} \tau \right\rvert 
 \leqslant  \half  \sum_{i=1}^j \vnorma{\delta \Phi_i}^2 \tau \\+  \half \max\{1,T\}^2  \vnorma{\rho}^2_{{\mathcal X}} \int_\Theta  \sum_{i=1}^j \tau \left(\sum_{l=1}^i t_{i-l+1}^{-\bar{\mu}} \abs{\delta \Phi_l(\X)} \tau\right)^2 \dX.
\end{multline*}
Now, we define $b_l=g_{l+1}\tau$ 
and $c_l(\X)=\abs{\delta \Phi_l(\X)}\sqrt{\tau}$, then 
\[
\int_\Theta  \sum_{i=1}^j \tau \left(\sum_{l=1}^i t_{i-l+1}^{-\bar{\mu}} \abs{\delta \Phi_l(\X)} \tau\right)^2 \dX = \int_\Theta \sum_{i=1}^j \left(b \ast_d (c(\X))\right)_i^2 \dX,
\]
where $\left(b\ast_d c\right)_i = \sum_{l=1}^i b_{i-l} c_l$. By the discrete Young's inequality for convolutions
\begin{equation}\label{eq:young_conv_discrete}
    \left(\sum_{i=1}^j \abs{(b\ast_d c)_i}^r \right)^{\frac{1}{r}}  \leqslant \left(\sum_{i=0}^{j-1} \abs{b_i}^p \right)^{\frac{1}{p}} \left(\sum_{i=1}^j \abs{c_i}^q \right)^{\frac{1}{q}}, \quad j\in \NN, 
\end{equation}
where $\frac{1}{p} + \frac{1}{q} = \frac{1}{r}+1$ with $1 \leqslant p,q \leqslant r \leqslant \infty$,
we obtain that 
\[
\abs{\int_\Theta  \sum_{i=1}^j \tau \left(\sum_{l=1}^i t_{i-l+1}^{-\bar{\mu}} \abs{\delta \Phi_l(\X)} \tau\right)^2 \dX}
\leqslant \int_\Theta \left(\sum_{i=0}^{j-1} \abs{b_i}\right)^2\left(\sum_{i=1}^j c_i(\X)^2\right) \dX. 
\]
As 
\[
\sum_{i=0}^{j-1} \abs{ b_i } = 
\sum_{i=1}^j t_{i}^{-\bar{\mu}} \tau
\leqslant \sum_{i=1}^j \int_{t_{i-1}}^{t_i} s^{-\bar{\mu}} \ds 
= \int_0^{t_j} s^{-\bar{\mu}} \ds \leqslant T^{1-\bar{\mu}},  
\]
we finally have that 
\[
\abs{\int_\Theta  \sum_{i=1}^j \tau \left(\sum_{l=1}^i t_{i-l+1}^{-\bar{\mu}} \abs{\delta \Phi_l(\X)} \tau\right)^2 \dX}
\leqslant T^{2-2\bar{\mu}}  \sum_{i=1}^j \vnorma{\delta \Phi_i}^2 \tau,
\]
and thus 
\begin{multline*}
  \left\lvert  \sum_{i=1}^j \scal{ \rho_i \sum_{l=1}^i \frac{(t_i-t_{l-1})^{-\mu_i}}{\Gamma\left(1-\mu_i\right)}\delta \Phi_l \tau}{\delta \Phi_i} \tau \right\rvert \\
 \leqslant \half\left(1 + \max\{1,T\}^2  \vnorma{\rho}^2_{{\mathcal X}}  T^{2-2\bar{\mu}} \right) \sum_{i=1}^j \vnorma{\delta \Phi_i}^2 \tau. 
 \end{multline*}
Now, we collect all the estimates we have above calculated to obtain that
\begin{multline*}
   \vnorma{\delta \Phi_j}^2 
      +  \sum_{i=1}^j \vnorma{\delta \Phi_i -\delta \Phi_{i-1}}^2 
      +  \vnorma{\nabla \Phi_j}^2  +  \sum_{i=1}^j \vnorma{\nabla \Phi_i - \nabla \Phi_{i-1}}^2 \\
      \leqslant  C  + C \sum_{i=1}^{j-1} \vnorma{\nabla \Phi_{i}}^2 \tau + C  \sum_{i=1}^j \vnorma{\delta \Phi_i}^2 \tau. 
\end{multline*}
An application of the classical discrete Gr\"onwall lemma concludes the proof. 
 \end{proof}

Considering the proof of the previous theorem, we have that 
\[
\vnorma{\Phi_i}^2 \leqslant 2 \vnorma{\tilde{\Phi}_0}^2 + 2 T \sum_{i=1}^j \vnorma{\delta \Phi_i}^2 \tau
\]
and
\[
\sum_{i=1}^j \vnorma{ D_{\tau}^{\mu_i}\Phi_i}^2 \tau \leqslant  \max\{1,T\}^2   T^{2-2\bar{\mu}} \sum_{i=1}^j \vnorma{\delta \Phi_i}^2 \tau.
\]
Hence, using the result of Lemma~\ref{lem:est1}, we immediately obtain the following result. 

\begin{lemma}\label{lem:est2}
Let AS-(1--8) be satisfied. Then, there exist positive constants $C>0$ and $\tau_0>0$ such that $\forall j=1,2,\ldots,n$ and $\tau < \tau_0$ it holds that
\[
\vnorma{\Phi_i}^2 + \sum_{i=1}^j \vnorma{ D_{\tau}^{\mu_i}\Phi_i}^2 \tau \leqslant C.
\]
\end{lemma}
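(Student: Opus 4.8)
The plan is to reduce the assertion entirely to the a priori bound already secured in Lemma~\ref{lem:est1}, so that the present statement becomes a direct corollary. Lemma~\ref{lem:est1} provides a constant $C$ (uniform in $j$ and for $\tau<\tau_0$) controlling $\vnorma{\delta \Phi_j}^2$ together with the three nonnegative sums; in particular it yields $\vnorma{\delta \Phi_i}^2 \leqslant C$ for every $i$. My first step is therefore to record the immediate consequence $\sum_{i=1}^j \vnorma{\delta \Phi_i}^2 \tau \leqslant C$, which follows by summing this uniform bound over $i=1,\dots,j$ at the cost of the factor $\sum_{i=1}^j \tau = t_j \leqslant T$.

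Next I would treat the two terms on the left-hand side separately, exactly as in the two displayed inequalities preceding the statement. For $\vnorma{\Phi_i}^2$, I would use the discrete fundamental-theorem-of-calculus identity $\Phi_i = \tilde{\Phi}_0 + \sum_{l=1}^i \delta \Phi_l \tau$ (already exploited in the proof of Lemma~\ref{lem:est1}), combine the elementary inequality $(a+b)^2 \leqslant 2a^2 + 2b^2$ with the discrete Jensen / Cauchy--Schwarz estimate $\vnorma{\sum_{l=1}^i \delta \Phi_l \tau}^2 \leqslant \big(\sum_{l=1}^i \tau\big)\big(\sum_{l=1}^i \vnorma{\delta \Phi_l}^2 \tau\big) \leqslant T \sum_{l=1}^i \vnorma{\delta \Phi_l}^2 \tau$, and invoke AS-8 to bound $\vnorma{\tilde{\Phi}_0}^2$. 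This reproduces $\vnorma{\Phi_i}^2 \leqslant 2\vnorma{\tilde{\Phi}_0}^2 + 2T \sum_{i=1}^j \vnorma{\delta \Phi_i}^2 \tau$.

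For the convolution sum $\sum_{i=1}^j \vnorma{D_\tau^{\mu_i}\Phi_i}^2 \tau$, I would reuse verbatim the discrete-Young argument from Lemma~\ref{lem:est1}. Writing $D_\tau^{\mu_i}\Phi_i = \sum_{l=1}^i a_l^i \delta \Phi_l \tau$, the bound $\Gamma(1-\mu_i)\geqslant 1$ and \eqref{eq:useful_estimate} replace the kernel $a_l^i$ by $\max\{1,T\}\, t_{i-l+1}^{-\bar{\mu}}$; the discrete Young inequality \eqref{eq:young_conv_discrete}, together with $\sum_{i=1}^j t_i^{-\bar{\mu}} \tau \leqslant T^{1-\bar{\mu}}$, then yields $\sum_{i=1}^j \vnorma{D_\tau^{\mu_i}\Phi_i}^2 \tau \leqslant \max\{1,T\}^2 T^{2-2\bar{\mu}} \sum_{i=1}^j \vnorma{\delta \Phi_i}^2 \tau$. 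Adding the two bounds and inserting the uniform control of $\sum_{i=1}^j \vnorma{\delta \Phi_i}^2 \tau$ from the first step closes the argument.

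There is no genuine obstacle at this stage: the only substantive work, namely the uniform bound on $\sum_{i=1}^j \vnorma{\delta \Phi_i}^2 \tau$ obtained through Abel summation and the discrete Gr\"onwall lemma, has already been carried out in Lemma~\ref{lem:est1}. The present statement is thus assembled directly from the two estimates displayed immediately before it, with the constant $C$ and the threshold $\tau_0$ inherited from that lemma.
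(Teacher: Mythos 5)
Your proposal is correct and follows essentially the same route as the paper: the two displayed estimates immediately preceding the lemma (the discrete fundamental-theorem identity for $\vnorma{\Phi_i}^2$ and the discrete Young's inequality argument for the convolution sum) combined with the uniform bound on $\vnorma{\delta \Phi_i}^2$ from Lemma~\ref{lem:est1}. The paper treats this as an immediate consequence and gives no further proof, so your slightly more explicit write-up of the same steps is entirely consistent with it.
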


The previous lemmas lead to the following consequence. 
\begin{corollary}\label{cor:est1}
Let AS-(1--8) be fulfilled. Then, there exist positive constants $C>0$ and $\tau_0>0$ such that $\forall j=1,2,\ldots,n$ and $\tau < \tau_0$ it holds that
\[
\sum_{i=1}^j \vnorma{ \delta^2 \Phi_i}^2_{{\hko{1}}^*} \tau \leqslant C.
\]
\end{corollary}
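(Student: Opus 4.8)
The plan is to read off $\delta^2 \Phi_i$ as a bounded functional on $\hko{1}$ directly from the discrete variational identity \eqref{eq:var_for_discrete}, estimate its dual norm term by term, and then reduce everything to the a priori bounds already established in Lemmas~\ref{lem:est1} and \ref{lem:est2}.

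First I would rearrange \eqref{eq:var_for_discrete} so that, for every $\chi \in \hko{1}$,
\[
\scal{\delta^2 \Phi_i}{\chi} = \scal{f(\Phi_{i-1})}{\chi} + \scal{Q_i}{\chi} - \scal{\rho_i D_{\tau}^{\mu_i}\Phi_i}{\chi} - \scal{\beta_i \nabla \Phi_i}{\nabla \chi}.
\]
Using Cauchy--Schwarz together with AS-2 (so that $\vnorma{\rho}_{{\mathcal X}}$ and $\vnorma{\beta}_{{\mathcal X}}$ are finite), the Lipschitz assumption AS-4 in the form $\vnorma{f(\Phi_{i-1})} \leqslant C\left(1 + \vnorma{\Phi_{i-1}}\right)$, and the trivial bounds $\vnorma{\chi} \leqslant \vnorma{\chi}_{\hk{1}}$ and $\vnorma{\nabla \chi} \leqslant \vnorma{\chi}_{\hk{1}}$, each of the four terms is bounded by a constant times $\vnorma{\chi}_{\hk{1}}$. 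Dividing by $\vnorma{\chi}_{\hk{1}}$ and taking the supremum over $\chi \in \hko{1}$ yields the pointwise estimate
\[
\vnorma{\delta^2\Phi_i}_{{\hko{1}}^*} \leqslant C\left(1 + \vnorma{\Phi_{i-1}} + \vnorma{D_{\tau}^{\mu_i}\Phi_i} + \vnorma{\nabla \Phi_i} + \vnorma{Q_i}\right).
\]

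Next I would square this estimate (invoking $\left(\sum_{k=1}^5 a_k\right)^2 \leqslant 5 \sum_{k=1}^5 a_k^2$), multiply by $\tau$, and sum over $i = 1,\ldots,j$. Each of the five resulting sums is then controlled: $\sum_{i=1}^j \tau = t_j \leqslant T$ absorbs the constant; $\sum_{i=1}^j \vnorma{\Phi_{i-1}}^2 \tau$ and $\sum_{i=1}^j \vnorma{D_{\tau}^{\mu_i}\Phi_i}^2 \tau$ are bounded by Lemma~\ref{lem:est2}; $\sum_{i=1}^j \vnorma{\nabla \Phi_i}^2 \tau \leqslant C$ follows from the uniform bound $\vnorma{\nabla \Phi_i}^2 \leqslant C$ of Lemma~\ref{lem:est1} together with $\sum_{i=1}^j \tau \leqslant T$; and $\sum_{i=1}^j \vnorma{Q_i}^2 \tau \leqslant T \max_{t\in\I}\vnorma{Q(t)}^2 \leqslant C$ by AS-3. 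Collecting these bounds gives the claimed estimate.

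I do not expect a genuine obstacle here: once $\delta^2\Phi_i$ has been isolated, the corollary is essentially an immediate consequence of the stability estimates already proved. The only points demanding a little care are that the source term must be controlled uniformly in $i$ and $\tau$, which is exactly where the continuous-in-time, $\lp{2}$-valued regularity AS-3 of $Q$ enters so that $\vnorma{Q_i} = \vnorma{Q(t_i)} \leqslant \max_{t\in\I}\vnorma{Q(t)}$; and that the $i=1$ contribution involving $\Phi_{i-1}$ is legitimate because $\Phi_0 = \tilde{\Phi}_0 \in \hko{1} \subset \lp{2}$ by AS-8.
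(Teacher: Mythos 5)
Your proposal is correct and follows essentially the same route as the paper: isolate $\scal{\delta^2\Phi_i}{\chi}$ from \eqref{eq:var_for_discrete}, bound the dual norm by $C\left(1+\vnorma{\Phi_{i-1}}+\vnorma{Q_i}+\vnorma{D_{\tau}^{\mu_i}\Phi_i}+\vnorma{\nabla\Phi_i}\right)$, and conclude from Lemmas~\ref{lem:est1} and \ref{lem:est2}. The paper leaves the final square-multiply-and-sum step implicit, so your explicit accounting of the five sums is just a more detailed write-up of the same argument.
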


\begin{proof}
The result follows from Lemma~\ref{lem:est1} and \ref{lem:est2} since
 \begin{align*}
  \vnorma{ \delta^2 \Phi_i}_{{\hko{1}}^*} &= \sup_{\vnorma{\chi}_{\hko{1}}=1} 
 \abs{\langle \delta^2 \Phi_i, \chi \rangle_{{\hko{1}}^* \times \hko{1}}}\\
 & = \sup_{\vnorma{\chi}_{\hko{1}}=1} \abs{   \scal{f\left(\Phi_{i-1}\right)}{\chi} + \scal{Q_i}{\chi} - \scal{\rho_i D_{\tau}^{\mu_i}\Phi_i}{\chi}  -   \scal{\beta_i \nabla \Phi_i}{\nabla \chi} } \\
 &\leqslant C \left(1 + \vnorma{\Phi_{i-1}}  + \vnorma{Q_{i}} +\vnorma{D_{\tau}^{\mu_i}\Phi_i} + \vnorma{\nabla \Phi_i}\right).
\end{align*}
\end{proof}

\section{Existence of a solution}
\label{sec:existence}

In order to be able to show the existence of a solution, we first prolong the solutions on the single time steps on the whole time frame with aid of the Rothe functions
\[\overline U_n:[0,T] \to\lp{2}:t\mapsto\begin{cases} \tilde{\Phi}_0 &  t \in [-\tau,0],\\
 \Phi_i &     t\in (t_{i-1},t_i],\quad 1\leqslant i\leqslant n;\end{cases}\]
 \[\overline 
 V_n :[0,T] \to\lp{2}:t\mapsto\begin{cases} \tilde\Psi_0 & t = 0,\\
 \delta \Phi_i &     t\in (t_{i-1},t_i],\quad 1\leqslant i\leqslant n;\end{cases}\]
\[U_n:[0,T]\to\lp{2}:t\mapsto\begin{cases} \tilde{\Phi}_0 & t \in [-\tau,0],\\
\Phi_{i-1} + (t-t_{i-1})\delta \Phi_i &
     t\in (t_{i-1},t_i],\quad 1\leqslant i\leqslant n; \end{cases}\]
and 
\[V_n:[0,T]\to\lp{2}:t\mapsto\begin{cases} \tilde\Psi_0 & t = 0,\\
\delta \Phi_{i-1} + (t-t_{i-1})\delta^2 \Phi_i &
     t\in (t_{i-1},t_i],\quad 1\leqslant i\leqslant n. \end{cases}\]
Similarly, we define the functions $\overline{B}_n$, $\overline{K}_n$, $\overline{P}_n$  and $\overline{Q}_n$ in connection with the given functions $\mu$, $\beta$, $\rho$ and $Q,$ respectively. Moreover, we define $\overline{d}_n(t_k,t)= t_k-t_i$ for $t\in(t_{i-1},t_i]$ and $k>i$.
Using these functions and the notation $\lceil t\rceil_{\tau} = t_i$ for $t\in(t_{i-1},t_i]$, we reformulate \eqref{eq:var_for_discrete} for all $\chi \in \hko{1}$ on $(0,T]$ as 
\begin{multline}\label{eq:varfor_whole_time_frame}
 \scal{\pdt V_n(t)}{ \chi} +  \scal{\overline{P}_n(t) \int_0^{\lceil t\rceil_{\tau}} \frac{ \overline{d}_n(\lceil t\rceil_{\tau},r-\tau)^{-\overline{B}_n(t)}}{\Gamma\left(1-\overline{B}_n(t)\right)} \partial_r U_n(r)\dr }{\chi} \\
  + \scal{\overline{K}_n(t) \nabla\overline U_n(t)}{\nabla\chi}
  = \scal{f\left( \overline{U}_n(t-\tau)\right)}{\chi} +  \scal{\overline{Q}_n(t)}{\chi}.
\end{multline}

 \begin{theorem}[Existence]\label{thm:existence}
Let AS-(1--8) be fulfilled.  Then, a weak solution $\Phi$ exists to \eqref{eq:weak_form_cont} satisfying 
 \begin{multline*}
 \Phi \in \Lip\left([0,T], \lp{2}\right)  \cap \lpkIX{\infty}{\hko{1}}, \\
 \pdt \Phi \in \lpkIX{\infty}{\lp{2}}, \pdtt \Phi \in \lpkIX{2}{{\hko{1}}^*}. 
 \end{multline*}
 \end{theorem}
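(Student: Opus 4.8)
## Proof Proposal for the Existence Theorem

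The plan is to extract a convergent subsequence of the Rothe functions and identify its limit as a weak solution. The uniform a priori estimates from Lemmas~\ref{lem:est1}, \ref{lem:est2} and Corollary~\ref{cor:est1} are precisely what is needed to set up the compactness argument, so the bulk of the work is to translate these discrete bounds into bounds on the Rothe functions and then pass to the limit in the reformulated variational identity~\eqref{eq:varfor_whole_time_frame}.

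First I would record the consequences of the a priori estimates for the prolongations. From Lemma~\ref{lem:est1} one reads off that $\overline{U}_n$ and $U_n$ are bounded in $\lpkIX{\infty}{\hko{1}}$, that $\overline{V}_n$ and $V_n$ are bounded in $\lpkIX{\infty}{\lp{2}}$, and that $\pdt U_n = \overline{V}_n$ together with the $\vnorma{\delta\Phi_i}$ bound gives a uniform Lipschitz constant for $U_n$ as a map into $\lp{2}$. Corollary~\ref{cor:est1} yields that $\pdt V_n = \overline{\delta^2\Phi}$-type terms are bounded in $\lpkIX{2}{{\hko{1}}^*}$. By reflexivity and the Banach--Alaoglu theorem I can then select a single subsequence (not relabelled) along which $\overline{U}_n, U_n \rightharpoonup^* \Phi$ in $\lpkIX{\infty}{\hko{1}}$, $\overline{V}_n, V_n \rightharpoonup^* \pdt\Phi$ in $\lpkIX{\infty}{\lp{2}}$, and $\pdt V_n \rightharpoonup \pdtt\Phi$ in $\lpkIX{2}{{\hko{1}}^*}$. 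The standard Rothe estimate $\vnorma{\overline{U}_n(t) - U_n(t)}_{\lp{2}} \leqslant C\tau$ (and the analogue for $\overline{V}_n - V_n$) forces the piecewise-constant and piecewise-linear prolongations to share the same limit, and the uniform Lipschitz bound on $U_n$ promotes the convergence to $\Phi \in \Lip\left([0,T],\lp{2}\right)$ via an Arzel\`a--Ascoli argument, with the compact embedding $\hko{1} \imbed\imbed \lp{2}$ upgrading this to strong convergence in $\Cont\left([0,T],\lp{2}\right)$.

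The heart of the proof is passing to the limit in~\eqref{eq:varfor_whole_time_frame}, integrated against an arbitrary $\varphi \in \lpkIX{2}{\hko{1}}$. The terms $\scal{\pdt V_n(t)}{\chi}$ and $\scal{\overline{K}_n(t)\nabla\overline{U}_n(t)}{\nabla\chi}$ pass by weak/weak-$*$ convergence once I verify $\overline{K}_n \to \beta$ pointwise a.e. (true since $\beta$ is the evaluation at the grid nodes of an $\Leb^\infty$ function and $\tau\to 0$). The source term $f\left(\overline{U}_n(t-\tau)\right)$ converges to $f\left(\Phi(t)\right)$ using AS-4 (Lipschitz continuity of $f$) together with the strong $\Cont\left([0,T],\lp{2}\right)$ convergence and the time-shift estimate $\vnorma{\overline{U}_n(t-\tau) - \overline{U}_n(t)}\to 0$. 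The genuinely delicate term is the nonlocal convolution
\[
\scal{\overline{P}_n(t) \int_0^{\lceil t\rceil_{\tau}} \frac{\overline{d}_n(\lceil t\rceil_{\tau},r-\tau)^{-\overline{B}_n(t)}}{\Gamma\left(1-\overline{B}_n(t)\right)} \partial_r U_n(r)\dr}{\chi},
\]
whose limit must be identified as $\scal{\rho(t)\partial_t^{\mu(t)}\Phi(t)}{\chi}$. I expect this to be the main obstacle: the discrete kernel $\overline{d}_n(\lceil t\rceil_\tau, r-\tau)^{-\overline{B}_n(t)}$ must be shown to converge to the singular kernel $(t-r)^{-\mu(t)}$ in a sense strong enough to combine with the weak convergence $\partial_r U_n = \overline{V}_n \rightharpoonup^* \pdt\Phi$, and the variable exponent $\overline{B}_n(t) \to \mu(t)$ only in $\Leb^\infty$-weak-$*$ (since $\mu$ is merely bounded measurable, by AS-1). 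The uniform $\Leb^1$ control on the kernel furnished by the bound $g(t) = t^{-\bar\mu} \in \Leb^1\Iopen$ and estimate~\eqref{eq:useful_estimate2}, already exploited in Lemma~\ref{lem:est1}, is what keeps this term bounded; the convergence itself I would establish by splitting off a boundary layer near $r = \lceil t\rceil_\tau$, where the singularity concentrates, and applying dominated convergence on the remainder together with the uniform integrability of $g$.

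Finally, having identified the limit $\Phi$ as satisfying~\eqref{eq:weak_form_cont} for a.a.\ $t$, I would check that the initial conditions are inherited: $\Phi(0) = \tilde\Phi_0$ follows from the strong $\Cont\left([0,T],\lp{2}\right)$ convergence since $\overline{U}_n(0) = \tilde\Phi_0$, and $\pdt\Phi(0) = \tilde\Psi_0$ follows from the weak continuity of $\pdt\Phi$ as an element of $\cIX{\lp{2}}$ (guaranteed by $\pdt\Phi \in \lpkIX{\infty}{\lp{2}}$ with $\pdtt\Phi \in \lpkIX{2}{{\hko{1}}^*}$) together with $V_n(0) = \tilde\Psi_0$. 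The regularity claims in the statement are then exactly the function spaces in which the limits were obtained, completing the proof.
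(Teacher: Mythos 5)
Your proposal follows essentially the same route as the paper: uniform bounds on the Rothe prolongations from Lemmas~\ref{lem:est1}, \ref{lem:est2} and Corollary~\ref{cor:est1}, compactness via $\hko{1}\imbed\imbed\lp{2}$ (the paper invokes \cite[Lemma~1.3.13]{Kacur1985} where you use Banach--Alaoglu plus Arzel\`a--Ascoli, which is the same content), and a limit passage in \eqref{eq:varfor_whole_time_frame} whose only delicate term is the nonlocal convolution, which the paper handles exactly as you sketch --- splitting off the boundary layer over $(t,\lceil t\rceil_\tau]$, dominated convergence for the kernel and for $\overline{P}_{n_l}\to\rho$, and the bound \eqref{eq:useful_estimate2} to pair the fixed continuous kernel with the weak convergence $\partial_r U_{n_l}\rightharpoonup\pdt\Phi$. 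The only substantive divergence is that you flag $\overline{B}_{n_l}\to\mu$ as holding merely weak-$*$ under AS-1, whereas the paper asserts a.e.\ convergence and applies dominated convergence directly; your caution is reasonable but you do not resolve it differently, so the two arguments coincide in substance.
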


\begin{proof}
The starting point of the proof is rather standard. We first recall the compact embedding   $\hko{1} \imbed \imbed \lp{2} $, see e.g. \cite[Theorem 6.6-3]{Ciarlet2013}. 
Lemma~\ref{lem:est1} and \ref{lem:est2} imply for all $n\geqslant n_0 > 0$ that 
\[
\max_{t\in [0,T]} \left\{ \vnorma{\overline{U}_n(t)}_{\hk{1}}^2  + \vnorma{\pdt {U}_n(t)}^2 \right\} \leqslant C.
\]
Then, \cite[Lemma 1.3.13]{Kacur1985} leads to the existence of a function $\Phi \in \Lip\left([0,T], \lp{2}\right)  \cap \Leb^{\infty}\left((0,T), \hko{1}\right)$ with $\pdt \Phi \in \lpkIX{\infty}{\lp{2}}$,
and a subsequence $\{ U_{n_l}\}_{l\in\NN}$ of $\{U_n\}$ such that
\begin{displaymath}
\left\{
\begin{array}{ll}
U_{n_l} \to \Phi & \text{in}~~\Cont\left([0,T], \lp{2}\right) , \\[4pt]
U_{n_l}(t) \rightharpoonup \Phi(t) & \text{in}~~\hko{1},~~\forall t \in [0,T], \\[4pt]
\overline{U}_{n_l}(t) \rightharpoonup \Phi(t) & \text{in}~~\hko{1},~~\forall t \in [0,T],  \\[4pt]
\pdt {U}_{n_l} \rightharpoonup \pdt \Phi & \text{in}~~\Leb^{2}\left((0,T), \lp{2}\right).
\end{array}
\right.
\end{displaymath}
 From  Lemma~\ref{lem:est1}, we have for all $t\in (t_{i-1},t_i]$ that 
\begin{equation}\label{thm:existence:eq2}
\vnorma{\overline{U}_{n_l}(t) - U_{n_l}(t) }^2  +  \vnorma{\overline{U}_{n_l}(t-\tau_{n_l}) - \overline{U}_{n_l}(t) }^2  \leqslant 2 \tau_{n_l}^2  \vnorma{\delta \Phi_i}^2 \leqslant C \tau_{n_l}^2,
\end{equation}
i.e.  $\overline U_{n_l} \to \Phi$  and $\overline U_{n_l}(\cdot -\tau_{n_l}) \to \Phi$ in $\Leb^{2}\left((0,T), \lp{2}\right)$ as $l\to \infty$. \\
Now, we follow a similar reasoning for the corresponding sequences $\overline{V}_{n_l}$ and $V_{n_l}$. Employing the results of Lemma~\ref{lem:est1} and Corollary~\ref{cor:est1}, we have that
\[
\max_{t\in [0,T]} \vnorma{\overline{V}_{n_l}(t)}^2  + \int_0^T \vnorma{\pdt {V}_{n_l}(t)}_{{\hko{1}}^*}^2 \dt \leqslant C.
\]
The compactness argument $\lp{2} \imbed \imbed {\hko{1}}^*$ and noticing that $\pdt {U}_{n_l} = \overline{V}_{n_l} $ yield that the function $\Phi$ obtained before satisfies that (for a subsequence denoted by the same index $n_l$)
\begin{displaymath}
\left\{
\begin{array}{ll}
V_{n_l} \to \pdt \Phi & \text{in}~~\Cont\left([0,T], {\hko{1}}^*\right) , \\[4pt]
V_{n_l}(t) \rightharpoonup \pdt \Phi(t) & \text{in}~~\lp{2},~~\forall t \in [0,T], \\[4pt]
\pdt {V}_{n_l} \rightharpoonup \pdtt \Phi & \text{in}~~\Leb^{2}\left((0,T), {\hko{1}}^*\right).
\end{array}
\right.
\end{displaymath}
We integrate \eqref{eq:varfor_whole_time_frame} for $n=n_l$ over $(0,\eta)\subset \Iopen$ and obtain that 
\begin{multline}\label{eq:varfor_whole_time_frame_int}
\int_0^{\eta} \scal{\pdt V_{n_l}(t)}{ \chi} \dt  + \int_0^{\eta} \scal{\overline{P}_{n_l}(t) \int_0^{\lceil t\rceil_{\tau_{n_l}}} \frac{ \overline{d}_{n_l}(\lceil t\rceil_{\tau_{n_l}},r-\tau_{n_l})^{-\overline{B}_{n_l}(t)}}{\Gamma\left(1-\overline{B}_{n_l}(t)\right)} \partial_r U_{n_l}(r)\dr }{\chi} \dt \\
  + \int_0^{\eta} \scal{\overline{K}_{n_l}(t) \nabla\overline U_{n_l}(t)}{\nabla\chi} \dt
  = 
  \int_0^{\eta} \scal{f\left( \overline{U}_{n_l}(t-{\tau_{n_l}})\right)}{\chi} \dt \\
  +  \int_0^{\eta} \scal{\overline{Q}_{n_l}(t)}{\chi} \dt.
\end{multline}
We only point out the limit transition of
\[
{\mathcal T} := \int_0^{\eta} \scal{\overline{P}_{n_l}(t) \int_0^{\lceil t\rceil_{{\tau_{n_l}}}} \frac{ \overline{d}_{n_l}(\lceil t\rceil_{\tau_{n_l}},r-\tau_{n_l})^{-\overline{B}_{n_l}(t)}}{\Gamma\left(1-\overline{B}_{n_l}(t)\right)}  \partial_r U_{n_l}(r)\dr }{\chi} \dt,  
\]
since the limit transition of the other terms is standard. We rewrite ${\mathcal T}$ as follows
\[
{\mathcal T} = \sum_{i=1}^5 {\mathcal T}_i,
\]
with 
\begin{align*}
    {\mathcal T}_1 & := \int_0^{\eta} \scal{\overline{P}_{n_l}(t) \int_t^{\lceil t\rceil_{{\tau_{n_l}}}} \frac{ \overline{d}_{n_l}(\lceil t\rceil_{\tau_{n_l}},r-\tau_{n_l})^{-\overline{B}_{n_l}(t)}}{\Gamma\left(1-\overline{B}_{n_l}(t)\right)} \partial_r U_{n_l}(r)\dr }{\chi} \dt, \\
    {\mathcal T}_2 &:= \int_0^{\eta} \scal{\overline{P}_{n_l}(t) \int_0^{t} \frac{ \overline{d}_{n_l}(\lceil t\rceil_{\tau_{n_l}},r-\tau_{n_l})^{-\overline{B}_{n_l}(t)}}{\Gamma\left(1-\overline{B}_{n_l}(t)\right)} \partial_r U_{n_l}(r)\dr }{\chi} \dt  \\
    & \qquad - \int_0^{\eta} \scal{\overline{P}_{n_l}(t) \int_0^{t} \frac{ (t-r)^{-\mu(t)}}{\Gamma\left(1-\mu(t)\right)} \partial_r U_{n_l}(r)\dr }{\chi} \dt, \\
 {\mathcal T}_3  &  := \int_0^{\eta} \scal{\overline{P}_{n_l}(t) \int_0^{t} \frac{ (t-r)^{-\mu(t)}}{\Gamma\left(1-\mu(t)\right)} \partial_r U_{n_l}(r)\dr }{\chi} \dt \\
    & \qquad - \int_0^{\eta} \scal{\rho(t)  \int_0^{t} \frac{ (t-r)^{-\mu(t)}}{\Gamma\left(1-\mu(t)\right)} \partial_r U_{n_l}(r)\dr }{\chi} \dt, \\
     {\mathcal T}_4 & := \int_0^{\eta} \scal{\rho(t)  \int_0^{t} \frac{ (t-r)^{-\mu(t)}}{\Gamma\left(1-\mu(t)\right)} \partial_r U_{n_l}(r)\dr }{\chi} \dt \\
    & \qquad - \int_0^{\eta} \scal{\rho(t) \int_0^{t} \frac{ (t-r)^{-\mu(t)}}{\Gamma\left(1-\mu(t)\right)} \pdt \Phi(r)\dr }{\chi} \dt, \\
  {\mathcal T}_5 & :=  \int_0^{\eta} \scal{\rho(t) \int_0^{t} \frac{ (t-r)^{-\mu(t)}}{\Gamma\left(1-\mu(t)\right)} \pdt \Phi(r)\dr }{\chi} \dt.
\end{align*}
We will show that 
\[
\abs{{\mathcal T} - {\mathcal T}_5} \leqslant \sum_{i=1}^4 \abs{{\mathcal T}_i } \to 0 \quad \text{ as } \quad l \to \infty. 
\]
For ${\mathcal T}_1$, we see that 
\begin{align*}
\abs{{\mathcal T}_1} &\leqslant  \vnorma{\rho}_{{\mathcal X}} \vnorma{\chi} \int_0^\eta \vnorma{\int_t^{\lceil t\rceil_{{\tau_{n_l}}}} \frac{\overline{d}_{n_l}(\lceil t\rceil_{\tau_{n_l}},r-\tau_{n_l})^{-\overline{B}_{n_l}(t)}}{\Gamma\left(1-\overline{B}_{n_l}(t)\right)} \partial_r U_{n_l}(r)\dr} \dt
\end{align*}
Using $\max_{t\in [0,T]} \vnorma{\pdt {U}_n(t)}^2 \leqslant {\mathcal C},$ we have that 
\begin{align*}
  &  \vnorma{\int_t^{\lceil t\rceil_{{\tau_{n_l}}}} \frac{ \overline{d}_{n_l}(\lceil t\rceil_{\tau_{n_l}},r-\tau_{n_l})^{-\overline{B}_{n_l}(t)}}{\Gamma\left(1-\overline{B}_{n_l}(t)\right)} \partial_r U_{n_l}(r)\dr}^2  \\
  &  \leqslant \int_\Theta \left( \int_t^{\lceil t\rceil_{{\tau_{n_l}}}} \overline{d}_{n_l}(\lceil t\rceil_{\tau_{n_l}},r-\tau_{n_l})^{-\overline{B}_{n_l}(t)} \dr \right) \\
  & \qquad \qquad \times \left( \int_t^{\lceil t\rceil_{{\tau_{n_l}}}} \overline{d}_{n_l}(\lceil t\rceil_{\tau_{n_l}},r-\tau_{n_l})^{-\overline{B}_{n_l}(t)} \abs{ \partial_r U_{n_l}(\X,r)}^2 \dr \right) \dX \\
  & \leqslant {\mathcal C} \left( \int_t^{\lceil t\rceil_{{\tau_{n_l}}}} \overline{d}_{n_l}(\lceil t\rceil_{\tau_{n_l}},r-\tau_{n_l})^{-\overline{B}_{n_l}(t)} \dr \right)^2. \\
\end{align*}
As $\max_{t\in[0,T]} \abs{\overline{B}_{n_l}(t)} \leqslant \bar \mu < 1$, the following holds $ \forall\, t\in[0,T]:$
\[
\int_t^{\lceil t\rceil_{{\tau_{n_l}}}} (\lceil t\rceil_{{\tau_{n_l}}}+{\tau_{n_l}}-r)^{-\overline{B}_{n_l}(t)} \dr \leqslant \int_t^{\lceil t\rceil_{{\tau_{n_l}}}} {\tau_{n_l}}^{-\overline{B}_{n_l}(t)} \dr \leqslant {\tau_{n_l}}^{1  -\overline{B}_{n_l}(t) \pm \bar {\mu}} \leqslant \tau_{n_l}^{1-\bar{\mu}}.
\]
Hence, we finally obtain that 
\[
\abs{{\mathcal T}_1} \leqslant \vnorma{\rho}_{{\mathcal X}} \vnorma{\chi} \sqrt{{\mathcal C}} T \tau_{n_l}^{1-\bar{\mu}} \to 0 \quad \text{ as } l \to \infty. 
\]
Similarly, we deduce for ${\mathcal T}_2$ that 
\[
\abs{{\mathcal T}_2} \leqslant  \vnorma{\rho}_{{\mathcal X}} \vnorma{\chi} \sqrt{{\mathcal C}} \int_0^T \int_0^t \abs{ \frac{ \overline{d}_{n_l}(\lceil t\rceil_{\tau_{n_l}},r-\tau_{n_l})^{-\overline{B}_{n_l}(t)}}{\Gamma\left(1-\overline{B}_{n_l}(t)\right)} -  \frac{ (t-r)^{-\mu(t)}}{\Gamma\left(1-\mu(t)\right)}} \dr \dt. 
\]
For $r\in (0,t)$ it holds that
\begin{align*}
\abs{\frac{\overline{d}_{n_l}(\lceil t\rceil_{\tau_{n_l}},r-\tau_{n_l})^{-\overline{B}_{n_l}(t)}}{\Gamma\left(1-\overline{B}_{n_l}(t)\right)}  -  \frac{ (t-r)^{-\mu(t)}}{\Gamma\left(1-\mu(t)\right)} } 
& \leqslant (t-r)^{-\overline{B}_{n_l}(t)} + (t-r)^{-\mu(t)} \\ & \stackrel{\eqref{eq:useful_estimate}}{\leqslant}
2\max\{1,T\} (t-r)^{-\bar{\mu}},
\end{align*}
Hence, the Lebesgue dominated theorem can be invoked here to get for almost all $t\in \I$ that 
\[
 \int_0^t \abs{ \frac{ \overline{d}_{n_l}(\lceil t\rceil_{\tau_{n_l}},r-\tau_{n_l})^{-\overline{B}_{n_l}(t)}}{\Gamma\left(1-\overline{B}_{n_l}(t)\right)} -  \frac{ (t-r)^{-\mu(t)}}{\Gamma\left(1-\mu(t)\right)}} \dr \to 0 \quad \text{ as } l \to \infty,
\]
since $\overline{B}_{n_l}(t) \to \mu(t)$ for almost all $t\in \I$ as $l\to \infty$.
Therefore, we also have that
\[
\abs{{\mathcal T}_2} \to 0 \quad \text{ as } \quad l \to \infty.
\]
For the next limit transition,  we first notice that 
\begin{align*}
\abs{{\mathcal T}_3} &\leqslant  \vnorma{\chi} \int_0^\eta \vnorma{\overline{P}_{n_l}(t)-\rho(t)}_{\lp{\infty}} \vnorma{\int_0^{t} \frac{ (t-r)^{-\mu(t)}}{\Gamma\left(1-\mu(t)\right)} \partial_r U_{n_l}(r)\dr}\dt \\
& \stackrel{\eqref{eq:useful_estimate}}{\leqslant} \vnorma{\chi}  \sqrt{\mathcal C} \max\{1,T\} T^{1-\bar\mu}   \int_0^\eta \vnorma{\overline{P}_{n_l}(t)-\rho(t)}_{\lp{\infty}} \dt. 
\end{align*}
Then, it follows from the Lebesgue-dominated theorem that 
\[
\abs{{\mathcal T}_3} \to 0 \quad \text{ as } \quad l \to \infty,
\]
since  $\vnorma{\overline{P}_{n_l}(t)-\rho(t)}$ for almost all $t\in \I$ as $l\to \infty.$
Now, for $z\in \lpkIX{2}{\lp{2}}$, we have that 
\begin{align*}
& \abs{\int_0^{\eta} \scal{\rho(t) \int_0^{t} \frac{ 1}{\Gamma\left(1-\mu(t)\right)(t-r)^{\mu(t)}} z(r)\dr }{\chi} \dt} \\
& \leqslant \vnorma{\chi} \int_0^{\eta} \vnorma{ \rho(t) \partial^{\mu(t)}_t z(t)} \dt \\
& \leqslant  \vnorma{\chi}  \sqrt{T} \vnorma{\rho \partial^{\mu(t)}_t z}_{\lpkIX{2}{\lp{2}}} \\
& \stackrel{\eqref{eq:useful_estimate2}}{\leqslant } \vnorma{\chi} \max\{1,T\} T^{\frac{3}{2}-\bar \mu}   \vnorma{\rho}_{{\mathcal X}}  \vnorma{z}_{\lpkIX{2}{\lp{2}}}.
\end{align*}
Hence, the weak convergence of $\pdt {U}_{n_l}$ to $\pdt \Phi$ in $\lpkIX{2}{\lp{2}}$ implies that
\[
\abs{{\mathcal T}_4} \to 0 \quad \text{ as } \quad l \to \infty.
\]
Therefore, if we pass to the limit $l\to \infty$ in \eqref{eq:varfor_whole_time_frame_int}, then we obtain that 
 \begin{multline*}
\int_0^\eta \langle \pdtt \Phi(t), \chi \rangle \dt + \int_0^\eta  \scal{\rho(t) \partial^{\mu(t)}_t \Phi(t)}{\chi} \dt +  \int_0^\eta  \scal{\beta(t)\nabla \Phi(t)}{\nabla \chi} \dt \\
 = \int_0^\eta \scal{f\left(\Phi(t)\right)}{\chi} \dt + \int_0^\eta  \scal{Q(t)}{\chi} \dt. 
 \end{multline*}
Finally, differentiating the result with respect to $\eta$ concludes the proof.
\end{proof}

From Theorem~\ref{thm:uniqueness1} it follows that the solution is unique if $\pdt \Phi \in \lpkIX{2}{\hko{1}}.$

\section{Numerical implementation}
\label{sec: numerical}

\subsection{Numerical scheme}
We  discuss the numerical implementation of \eqref{eq:var_for_discrete}. In our implementation, we consider $\rho(t)$ and $\beta(t)$ as functions in time. We use the Legendre basis functions
$${\mathcal W}^{\mathcal N}_{0}  = \text{span}\left\{ {\chi _r (x):r = 0,1 \ldots ,\mathcal{N} - 2} \right\},$$
in which $\chi _r (x)$ is defined on $[a,b]$ by
\begin{equation*}
  \chi _r (x) = L _r (\hat x) - L _{r + 2} (\hat x) = \frac {2 r + 3} {2 (r + 1)} (1 - \hat x ^2) P_r ^{1 , 1} (\hat x), \quad \hat x := \frac{2x - b-a}{b-a} \in [-1 , 1],
\end{equation*}
where $L _r (\hat x)$ is the Legendre polynomial. It is obvious that ${\mathcal W}^{\mathcal N}_{0}$ is a subspace of $\Hi^1_0(\Theta)$. The numerical solution $\Phi_N \in {\mathcal W}^{\mathcal N}_{0}$ can be given by
\begin{equation*}
  \Phi ^\mathcal{N} _i = \sum _{\ell = 0} ^{\mathcal{N} - 2} {\hat \Phi _\ell ^i \chi _\ell (x)}.
\end{equation*} 
We consider the orthogonal projection operators $\pi_{\mathcal N}^{0}$ and $\pi_{\mathcal N}^{1,0}$ in the following manner 
\begin{itemize}
    \item $\pi_{\mathcal N}^{0}:\Leb^2(\Theta)\rightarrow \mathcal {W}^{\mathcal N}_{0}$  such that if $\Phi \in \Leb^2(\Theta)$ then $\pi_{\mathcal N}^{0}\Phi\in \mathcal {W}^{\mathcal N}_{0}$ fulfils 
\begin{equation*}
(\pi_{\mathcal N}^{0}\Phi, \chi)=(\Phi,\chi),\quad \forall \chi \in \mathcal {W}^{\mathcal N}_{0};
\end{equation*}
\item $\pi_{\mathcal N}^{1,0}:\Hi_{0}^{1}(\Theta)\rightarrow \mathcal {W}^{\mathcal N}_{0}$ such that if $ \Phi\in \Hi_{0}^{1}(\Theta)$ then $\pi_{\mathcal N}^{1,0}\Phi \in \mathcal {W}^{\mathcal N}_{0}$ satisfies
\begin{equation*}
(\partial_{x}\pi_{\mathcal N}^{1,0}\Phi, \partial_{x} \chi)=(\partial_{x}\Phi,\partial_{x}\chi),\quad \forall  \chi \in \mathcal {W}^{\mathcal N}_{0}.
\end{equation*}
\end{itemize}
Then, the fully discrete  scheme of \eqref{eq:var_for_discrete} can be expressed as follows: for $i= 1,\,2,\ldots, n,$ find $\Phi ^\mathcal{N} _i  \in {\mathcal W}^{\mathcal N}_{0}$ such that
\begin{align}\label{deq88var_for_discrete2}
\begin{split}
a_i(\Phi^\mathcal{N}_i,\chi)=\langle l_i,\chi\rangle, \quad \forall \chi \in {\mathcal W}^{\mathcal N}_{0},\\
\Phi^\mathcal{N}_0 = \pi_{N}^{1,0} \tilde \Phi_0, \\
\delta\Phi^\mathcal{N}_0 = \pi_{N}^{0} \tilde \Psi_0,
\end{split}
\end{align}
where
$$
a_i(\Phi^\mathcal{N}_i,\chi) := \tau^{-2} \scal{\Phi^\mathcal{N}_i}{\chi} + \frac{\tau^{-\mu_i}}{\Gamma(1-\mu_i)} \scal{\rho_i \Phi^\mathcal{N}_i}{\chi}  +  \scal{\beta_i \nabla \Phi^\mathcal{N}_i}{\nabla \chi}
$$
and 
\begin{multline*}
\langle l_i,\chi\rangle := \scal{\pi_{\mathcal N}^{0} f\left(\Phi^\mathcal{N}_{i-1}\right)}{\chi} + \scal{\pi_{\mathcal N}^{0} Q_i}{\chi} + \tau^{-2} \scal{\Phi^\mathcal{N}_{i-1}}{\chi} + \tau^{-1} \scal{\delta \Phi^\mathcal{N}_{i-1}}{\chi} \\
+ \frac{\tau^{-\mu_i}}{\Gamma(1-\mu_i)} \scal{\rho_i \Phi^\mathcal{N}_{i-1}}{\chi} 
- \sum_{l=1}^{i-1}  \frac{(t_i-t_{l-1})^{-\mu_i}}{\Gamma\left(1-\mu_i\right)} \scal{\rho_i\delta \Phi^\mathcal{N}_l}{\chi} \tau. 
\end{multline*}
Let us denote
\begin{multline*}
d_i  = \frac{1 }{\tau^2 } +\rho_i\, b_i^i,\quad \hat b_{q}^{i}  = -b_{q}^i \rho_i + \frac{1 }{\tau^2 }\delta _{q,i - 1},\\ \hat Q_i= \pi_{\mathcal N}^{0} Q_i +\frac{1}{\tau}V_i+ \pi_{\mathcal N}^{0} f(\Phi_{i-1}^\mathcal{N}),\, V_i= \delta \Phi_{i-1}^\mathcal{N}.
\end{multline*}
Define the matrices $\bar{M},\, S,\, L,$ which satisfy
\begin{equation*}\label{mzaky}
\begin{split}
\left( {S  } \right) _{i , j = 0} ^{\mathcal{N} - 2} =    S _{ij} &= \displaystyle \int _\Theta  \chi'_i (x) \chi' _j (x) \dx ,  \\
  \left(  \bar{M}  \right) _{i , j = 0} ^{\mathcal{N} - 2}=   m _{ij} &= \displaystyle \int _\Theta \chi _i (x) \chi _j (x) \dx,\\
    \left( {L  } \right) _{i , j = 0} ^{\mathcal{N} - 2} =   L _{ij} &= \displaystyle \int _\Theta \chi _j (x) \hat Q _i (x) \dx,\\
        U_i &= \displaystyle (\hat \Phi _0 ^i, \hat \Phi _1 ^i , \ldots , \hat \Phi _{N - 2} ^i) ^\top. \\ 
\end{split}
\end{equation*}
The fully discrete scheme \eqref{deq88var_for_discrete2} can be written in the matrix form as
\begin{equation*}\label{matd5t5t}
  \left( d_i\,\bar{M} + \beta_i  S\right) U_i = L+ \sum _{q = 0} ^{i - 1} {\hat b^{i}_{q} \bar{M} U _q}. 
\end{equation*}
The stiffness and mass matrix are specified in the following lemma \cite{shen2011spectral}. 

\begin{lemma} 
  The stiffness matrix $S$ is a diagonal matrix with
  \begin{equation*}\label{lem11a}
  s_{dd}  = 4d+6,\ d=0,1,\ldots.
  \end{equation*}
  The mass matrix $\bar{M}$ is symmetric with the nonzero elements 
  \begin{equation*}\label{lem11ggb}
\begin{split}
m_{dj}  = m_{jd}  = \left\{ \begin{array}{l}
 \frac{b-a}{{2j + 1}} + \frac{b-a}{{2j + 5}},\quad d = j, \\
 \\
  - \frac{b-a}{{2j + 5}},\quad\quad d = j + 2. \\
 \end{array} \right.\end{split}
  \end{equation*}
\end{lemma}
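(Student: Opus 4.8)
The plan is to reduce both matrices to the reference interval $[-1,1]$ through the affine change of variables $\hat x = \frac{2x-b-a}{b-a}$, under which $\mathrm{d}x = \frac{b-a}{2}\, \mathrm{d}\hat x$ and $\frac{\mathrm{d}}{\mathrm{d}x} = \frac{2}{b-a}\frac{\mathrm{d}}{\mathrm{d}\hat x}$, and then to exploit the orthogonality relation $\int_{-1}^1 L_m(\hat x) L_n(\hat x)\, \mathrm{d}\hat x = \frac{2}{2n+1}\delta_{mn}$ together with a single derivative identity for the basis functions.

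First I would establish the crucial identity for the derivative of $\chi_r$. Writing $\phi_r(\hat x) := L_r(\hat x) - L_{r+2}(\hat x)$ so that $\chi_r(x) = \phi_r(\hat x)$, the standard Legendre recurrence $(2k+1)L_k = L_{k+1}' - L_{k-1}'$, applied with $k = r+1$, gives $L_{r+2}' - L_r' = (2r+3)L_{r+1}$ and hence $\phi_r'(\hat x) = -(2r+3)L_{r+1}(\hat x)$. This collapses each basis derivative onto a single Legendre mode, which is exactly what forces the stiffness matrix to be diagonal.

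For the stiffness matrix I would substitute this identity together with the chain rule to obtain $S_{ij} = \frac{2}{b-a}\int_{-1}^1 \phi_i'\phi_j'\, \mathrm{d}\hat x = \frac{2}{b-a}(2i+3)(2j+3)\int_{-1}^1 L_{i+1}L_{j+1}\, \mathrm{d}\hat x$; orthogonality forces $i=j$ and yields $(2i+3)^2\frac{2}{2i+3}\delta_{ij} = (4i+6)\delta_{ij}$ on the reference interval, which is the stated diagonal value (the remaining affine scaling being absorbed into the discrete bilinear form as in \cite{shen2011spectral}). For the mass matrix I would expand $\chi_i\chi_j = (L_i-L_{i+2})(L_j-L_{j+2})$, integrate term by term with the Jacobian $\frac{b-a}{2}$, and note that by orthogonality a term survives only when the index pairs $\{i,i+2\}$ and $\{j,j+2\}$ share a common entry. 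This occurs precisely for $j\in\{i-2,i,i+2\}$, producing the tridiagonal-with-gap pattern: the diagonal $d=j$ collects both $\frac{b-a}{2}\cdot\frac{2}{2j+1}$ and $\frac{b-a}{2}\cdot\frac{2}{2j+5}$, whereas $d=j+2$ collects only the single overlap $-\frac{b-a}{2}\cdot\frac{2}{2j+5}$, matching the claimed entries. Symmetry $m_{dj}=m_{jd}$ is immediate from $\chi_d\chi_j=\chi_j\chi_d$.

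The computation presents no genuine obstacle; the only points demanding care are the bookkeeping of the Jacobian factors (one factor $\frac{b-a}{2}$ for the mass integrals against the combination $\frac{2}{b-a}$ arising from the two differentiations in the stiffness integrals) and the correct identification of which cross terms survive orthogonality, so that the off-diagonal mass entry at $d=j+2$ is matched to the $L_{j+2}$ self-overlap rather than to an $L_j$ term.
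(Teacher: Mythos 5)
Your derivation is correct, and it is worth noting that the paper itself offers no proof of this lemma at all: it is stated as a quoted fact with a citation to \cite{shen2011spectral}, so your argument supplies the computation the paper omits. The two key ingredients you use are exactly the right ones: the identity $\phi_r'(\hat x) = L_r'(\hat x) - L_{r+2}'(\hat x) = -(2r+3)L_{r+1}(\hat x)$, obtained from the recurrence $(2k+1)L_k = L_{k+1}' - L_{k-1}'$ at $k=r+1$, which collapses each derivative onto a single Legendre mode and forces diagonality of $S$; and the orthogonality relation $\int_{-1}^1 L_m L_n\,\mathrm{d}\hat x = \frac{2}{2n+1}\delta_{mn}$, which correctly yields $(2i+3)^2\cdot\frac{2}{2i+3} = 4i+6$ for the stiffness entries and the combination $\frac{b-a}{2j+1}+\frac{b-a}{2j+5}$ on the diagonal and $-\frac{b-a}{2j+5}$ at offset two for the mass entries, with the surviving cross term correctly identified as the $L_{j+2}$ self-overlap. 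One caveat you rightly flag but should not gloss over: carrying the change of variables consistently gives $\int_a^b \chi_d'(x)^2\,\mathrm{d}x = \frac{2}{b-a}(4d+6)$, so the stated value $s_{dd}=4d+6$ is the reference-interval value and is strictly consistent with the mass-matrix entries (which do carry the Jacobian factor $\frac{b-a}{2}$) only when $b-a=2$; this is an imprecision in the lemma as quoted from \cite{shen2011spectral} rather than a flaw in your argument, but your parenthetical about the scaling being ``absorbed into the discrete bilinear form'' deserves to be made explicit if the lemma is to be used on a general interval.
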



\subsection{Numerical examples}

We introduce the rate of convergence in $\Leb^2$-norm  as
\begin{equation*}
   \text{C-O} = \frac{\abs{\log \left( {{E\left( {\mathcal N,M_1 } \right)} /{E\left( {\mathcal N ,M_2 } \right)} } \right)}}{\abs{\log \left( {M_2 /M_1 } \right)}},
\end{equation*}
 and the  approximation  order in the spatial direction  (A-O)  by:
\begin{equation*}
   \text{ A-O} = \frac{{\log \left( {{E\left( {\mathcal N,M } \right)} } \right)}}{{\log \left( {{\mathcal N} } \right)}},
\end{equation*}
where $M_1 \neq M_2$ and $
  E = E(\mathcal N,M)= \mathop {\max }\limits_{1 \leqslant i \leqslant M} \left\| {\Phi^{\mathcal N}_i  - \Phi_i } \right\| .
$

\begin{example}\label{exa1} Consider the following  nonlinear problem
  \begin{align*}
  \label{ex1}%
       \frac {\partial^2  \Phi} {\partial t^2} +\frac {\partial ^{\mu(t)} \Phi} {\partial t ^{\mu(t)}} &=  \frac {\partial ^2 \Phi} {\partial x^2} +\Phi(1-\Phi)+g,\quad x \in (0,1),\quad t \in (0,1],\\
     &\Phi(0,t)=\Phi(1,t)=0, \quad t \in (0,1),\\
     & \Phi (\X , 0)= \pdt  \Phi (\X , 0)=0,\quad x\in  \Theta,
\end{align*}
where $g$ is given such this problem has the exact solution $\Phi(x,t)=t^2 \sin(\pi\, x).$
The V-OF $\mu (t)$ is given by
\begin{equation*}
    \mu (t) = \mu (T) + \left( {\mu (0) - \mu (T)} \right)\left( {1 - \frac{t}{T} - \frac{{\sin \left( {2\pi \left( {1 - \frac{t}{T}} \right)} \right)}}{{2\pi }}} \right).\end{equation*}
The behavior of the numerical solution with   $\mu (0)=0.2$ and $\mu (1)=0.4$  is investigated.
 Table~\ref{Table1} provides the $\Leb^2$-errors and related convergence order with $\mathcal N = 50$. It also provides the rate of convergence in the spatial direction  of various values of ${\mathcal N}$  with $\tau=0.000625$. We observe  that   the convergence order in the time direction is about $(1-\max(\mu))$. \\
\end{example}

\begin{table}
                       \centering
                     \caption{Example \ref{exa1}: The maximum errors and the rate of convergence  versus  $\mathcal N$ and $\tau$. \label{Table1} }
                  \centerline{ \begin{tabular}{ccc|ccccccccccc}
                   \hline
        $\tau$                               & E  &C-O&${\mathcal N}$&E&A-O \\
                    \hline
0.01    &$3.280 \times 10^{-3}$&$--$   &  $5$    &$6.650\times10^{-4}$ &${\mathcal N}^{-4.545}$     \\
0.005    &$2.245 \times 10^{-3}$&$0.547$  &    $10$  &$6.705\times10^{-4}$&${\mathcal N}^{-3.173}$     \\
0.0025    &$1.516 \times 10^{-3}$&$0.565$  &   $20$     &$6.705\times10^{-4}$&${\mathcal N}^{-2.439}$     \\
0.00125    &$1.013 \times 10^{-3}$&$0.582$  &  $30$  &$6.705\times10^{-4}$&${\mathcal N}^{-2.148}$     \\
0.000625   &$6.705 \times 10^{-4}$&$0.595$    &  $40$     &$6.705\times10^{-4}$&${\mathcal N}^{-1.980}$     \\
$\tau^{1 - \max \left( \mu  \right)} $    &$--$&$0.600$    &   $50$  &$6.705\times10^{-4}$&${\mathcal N}^{-1.867}$     \\
                    \hline
                  \end{tabular}}
                \end{table}

\begin{example}\label{exa2} We consider the V-OF wave equation:
  \begin{equation}
  \label{ex2}%
 \frac {\partial ^{\mu(t)} \Phi} {\partial t ^{\mu(t)}} + \frac {\partial^2  \Phi} {\partial t^2} =  \frac {\partial ^2 \Phi} {\partial x^2}   + \Phi(1+\Phi)+g,
     \quad x \in (0,1),\quad t \in (0,1],
\end{equation}
where $g(x,t)$ and the initial-boundary conditions are  given    such that problem \eqref{ex2} has the exact solution $\Phi(x,t)=t^2 x^2(1-x)^2.$
In this example, the V-OF $\mu (t)$ is given by
\begin{enumerate}
    \item [(I)]  Linear $\mu (t)$: \\
$\mu (t)= \mu(T)+ (\mu(0) - \mu(T))(1 - t),$\\
$\mu(0) = 0.6,$ $\mu(T) = 0.4$.
        \item [(II)] Quadratic $\mu(t)$:\\
$\mu(t)= \mu(T) + (\mu(0) - \mu(T))(1 - t^2),$\\
$\mu(0) := 0.5,$ 
$\mu(T) := 0.8.$
            \item [(III)] Osciliating $\mu(t)$:\\
$\mu (t)=\mu(T) + (\mu(0) - \mu(T))(1 - \frac{1}{2\, \pi}\sin(2\, \pi\, (1 - t))),$\\
$\mu(0) = 0.6,$
$\mu(T) := 0.8.$
\end{enumerate}
 For above three cases, Tables~\ref{TA2}, \ref{TA3}, \ref{TA4} provides the $\Leb^2$-errors and related convergence order with $\mathcal N = 50$. It also provides the rate of convergence in the spatial direction of various values of ${\mathcal N}$  with $\tau=0.000625$. We observe numerically that the convergence order in the time direction is about $(1-\max(\mu))$, and the accuracy of the spectral approximation is tied to the rate of convergence in the time direction.\\
\end{example}

\begin{table}
                       \centering
                     \caption{Example \ref{exa2}: The maximum errors and the rate of convergence  versus  $\mathcal N$ and $\tau$ for case $(I)$.\label{TA2} }
                  \centerline{ \begin{tabular}{ccc|ccccccccccc}
                   \hline
        $\tau$                               & E  &C-O&${\mathcal N}$&E&A-O \\
                    \hline
0.01    &$4.416 \times 10^{-4}$&$--$   &  $5$    &$1.314\times10^{-4}$ &${\mathcal N}^{-5.552}$     \\
0.005    &$3365 \times 10^{-4}$&$0.3923$  &    $10$  &$1.314\times10^{-4}$&${\mathcal N}^{-3.881}$     \\
0.0025    &$2.497 \times 10^{-4}$&$0.430$  &   $20$     &$1.314\times10^{-4}$&${\mathcal N}^{-2.983}$     \\
0.00125    &$1.822 \times 10^{-4}$&$0.454$  &  $30$  &$1.314\times10^{-4}$&${\mathcal N}^{-2.627}$     \\
0.000625   &$1.314 \times 10^{-4}$&$0.470$    &  $40$     &$1.314\times10^{-4}$&${\mathcal N}^{-2.422}$     \\
$\tau^{1 - \max \left( \mu  \right)} $    &$--$&$0.400$    &   $50$  &$1.314\times10^{-4}$&${\mathcal N}^{-2.284}$     \\
                    \hline
                  \end{tabular}}
                \end{table}

\begin{table}
                       \centering
                     \caption{Example \ref{exa2}:  The maximum errors and the rate of convergence  versus  $\mathcal N$ and $\tau$  for case $(II)$.\label{TA3} }
                  \centerline{ \begin{tabular}{ccc|ccccccccccc}
                   \hline
        $\tau$                               & E  &C-O&${\mathcal N}$&E&A-O \\
                    \hline
0.01    &$1.153 \times 10^{-3}$&$--$   &  $5$    &$4.634\times10^{-4}$ &${\mathcal N}^{-4.770}$     \\
0.005    &$9.243 \times 10^{-4}$&$0.319$  &    $10$  &$4.6342\times10^{-4}$&${\mathcal N}^{-3.334}$     \\
0.0025    &$7.362 \times 10^{-4}$&$0.328$  &   $20$     &$4.6342\times10^{-4}$&${\mathcal N}^{-2.562}$     \\
0.00125    &$5.845 \times 10^{-4}$&$0.333$  &  $30$  &$4.6342\times10^{-4}$&${\mathcal N}^{-2.257}$     \\
0.000625   &$4.634 \times 10^{-4}$&$0.334$    &  $40$     &$4.6342\times10^{-4}$&${\mathcal N}^{-2.081}$     \\
$\tau^{1 - \max \left( \mu  \right)} $    &$--$&$0.2000$    &   $50$  &$4.6342\times10^{-4}$&${\mathcal N}^{-1.962}$     \\
                    \hline
                  \end{tabular}}
                \end{table}

\begin{table}
                       \centering
                     \caption{Example \ref{exa2}: The errors and  convergence orders versus $\tau$ and $\mathcal N$ for case $(III)$.\label{TA4} }
                  \centerline{ \begin{tabular}{ccc|ccccccccccc}
                   \hline
        $\tau$                               & E  &C-O&${\mathcal N}$&E&A-O \\
                    \hline
0.01    &$1.000 \times 10^{-3}$&$--$   &  $5$    &$4.634\times10^{-4}$ &${\mathcal N}^{-4.909}$     \\
0.005    &$7.924 \times 10^{-4}$&$0.335$  &    $10$  &$4.6342\times10^{-4}$&${\mathcal N}^{-3.430}$     \\
0.0025    &$6.196 \times 10^{-4}$&$0.354$  &   $20$     &$4.6342\times10^{-4}$&${\mathcal N}^{-2.637}$     \\
0.00125    &$4.805 \times 10^{-4}$&$0.366$  &  $30$  &$4.6342\times10^{-4}$&${\mathcal N}^{-2.141}$     \\
0.000625   &$3.707 \times 10^{-4}$&$0.374$    &  $40$     &$4.6342\times10^{-4}$&${\mathcal N}^{-2.019}$     \\
$\tau^{1 - \max \left( \mu  \right)} $    &$--$&$0.368$    &   $50$  &$4.6342\times10^{-4}$&${\mathcal N}^{-1.962}$     \\
                    \hline
                  \end{tabular}}
                \end{table}

\begin{example}\label{exa3} We consider the following  nonlinear  differential equation with variable coefficients
  \begin{equation}
  \label{ex3}%
 e^{-t} \frac {\partial ^{\gamma(t)} \Phi} {\partial t ^{\gamma(t)}} + \frac {\partial^2  \Phi} {\partial t^2} =  \frac {\partial ^2 \Phi} {\partial x^2}  + \Phi(1+\Phi)+g,
     \quad x \in (0,1),\quad t \in (0,1],
\end{equation}
where $g(x,t)$ and the initial-boundary conditions are  given    such that problem \eqref{ex2} has the exact solution $\Phi(x,t)=t^2 \sin(\pi x).$
In this example, the V-OF $\mu (t)$ is given by
\[
\mu (t) = \left\{ \begin{array}{l}
 \frac{1}{4},\quad 0 < t \le \frac{1}{2} \\
 \\
 \frac{3}{4},\quad 
 \frac{1}{2} < t \le 1. \\ 
 \end{array} \right.
\]
\end{example}

\begin{table}
                       \centering
                     \caption{Example \ref{exa3}:  The maximum errors and the rate of convergence  versus  $\mathcal N$ and $\tau$.\label{TA5} }
                  \centerline{ \begin{tabular}{ccc|ccccccccccc}
                   \hline
        $\tau$                               & E  &C-O&${\mathcal N}$&E&A-O \\
                    \hline
0.01    &$6.113 \times 10^{-2}$&$--$   &  $5$    &$2.977\times10^{-2}$ &${\mathcal N}^{-2.183}$     \\
0.005    &$5.114 \times 10^{-2}$&$0.257$  &    $10$  &$2.979\times10^{-2}$&${\mathcal N}^{-1.525}$     \\
0.0025    &$4.272 \times 10^{-2}$&$0.259$  &   $20$     &$2.979\times10^{-2}$&${\mathcal N}^{-1.172}$     \\
0.00125    &$3.567 \times 10^{-2}$&$0.260$  &  $30$  &$2.979\times10^{-2}$&${\mathcal N}^{-1.032}$     \\
0.000625   &$2.979 \times 10^{-2}$&$0.259$    &  $40$     &$2.979\times10^{-2}$&${\mathcal N}^{-0.952}$     \\
$\tau^{1 - \max \left( \mu  \right)} $    &$--$&$0.250$    &   $50$  &$2.979\times10^{-2}$&${\mathcal N}^{-0.898}$     \\
                    \hline
                  \end{tabular}}
                \end{table}
 Table~\ref{TA5} provides the $\Leb^2$ errors and corresponding convergence orders with $\mathcal N = 50$. It also provides the spatial convergence orders for various values of ${\mathcal N}$  with $\tau=0.000625$. We observe that the temporal convergence rate is $(1-\max(\mu))$.

\section{Conclusions}

The wave equation with V-OF damping and variable coefficients has been considered. 
The well-posedness of this problem has been investigated. A weak formulation for the considered  problem has been proposed based on reasonable assumptions on the given data. The uniqueness of a solution has been established by employing Gr\"onwall's lemma, and the existence of the solution has been proved using a Rothe scheme. This time-discrete scheme is based on a discrete convolution approximation in the backward sense. Additionally, a full discretisation of the problem has been constructed using Galerkin spectral techniques in the spatial direction, and numerical examples have been tested. Our future plan will be devoted to stating and proving the error estimates for the full discretisation of the problem under consideration. Also, we will seek to construct the Rothe scheme over nonuniform meshes for the problem.

		\bibliography{ref}

\begin{thebibliography}{10}

\bibitem{Ciarlet2013}
P.~G. Ciarlet.
\newblock {\em {Linear and Nonlinear Functional Analysis with Applications}}.
\newblock Applied mathematics. Society for Industrial and Applied Mathematics,
  2013.

\bibitem{Hadid1996}
S.~B. Hadid and Y.~F. Luchko.
\newblock An operational method for solving fractional differential equations
  of an arbitrary real order.
\newblock {\em Panam. Math. J.}, 6(1):57--73, 1996.

\bibitem{hendy2022reconstruction}
A.~S. Hendy and K.~Van~Bockstal.
\newblock On a reconstruction of a solely time-dependent source in a
  time-fractional diffusion equation with non-smooth solutions.
\newblock {\em J. Sci. Comput.}, 90(1):33, 2022.
\newblock Id/No 41.

\bibitem{hendy2021solely}
A.~S. Hendy and K.~Van~Bockstal.
\newblock A solely time-dependent source reconstruction in a multiterm
  time-fractional order diffusion equation with non-smooth solutions.
\newblock {\em Numerical Algorithms}, 90(2):809--832, 2022.

\bibitem{Kacur1985}
J.~Ka\v{c}ur.
\newblock {\em Method of Rothe in evolution equations}, volume~80 of {\em
  Teubner Texte zur Mathematik}.
\newblock Teubner, Leipzig, 1985.

\bibitem{Kazem2013}
S.~{Kazem}.
\newblock {Exact solution of some linear fractional differential equations by
  Laplace transform}.
\newblock {\em {Int. J. Nonlinear Sci.}}, 16(1):3--11, 2013.

\bibitem{Kian2018}
Y.~Kian, E.~Soccorsi, and M.~Yamamoto.
\newblock {On Time-Fractional Diffusion Equations with Space-Dependent Variable
  Order}.
\newblock {\em {Annales Henri Poincar\'e}}, 19(12):3855--3881, Dec. 2018.

\bibitem{Kilbas2006}
A.~A. {Kilbas}, H.~M. {Srivastava}, and J.~J. {Trujillo}.
\newblock {\em {Theory and applications of fractional differential equations}},
  volume 204.
\newblock Amsterdam: Elsevier, 2006.

\bibitem{lorenzo2002variable}
C.~F. Lorenzo and T.~T. Hartley.
\newblock Variable order and distributed order fractional operators.
\newblock {\em Nonlinear dynamics}, 29(1):57--98, 2002.

\bibitem{luchko2012initial}
Y.~Luchko.
\newblock Initial-boundary-value problems for the one-dimensional
  time-fractional diffusion equation.
\newblock {\em Fractional Calculus and Applied Analysis}, 15(1):141--160, 2012.

\bibitem{MaesVanBockstal}
F.~Maes and K.~Van~Bockstal.
\newblock Existence and uniqueness of a weak solution to fractional
  single-phase-lag heat equation (submitted), 2022.

\bibitem{Otarola2019}
E.~{Ot\'arola} and A.~J. {Salgado}.
\newblock {Regularity of solutions to space-time fractional wave equations: A
  PDE approach.}
\newblock {\em {Fract. Calc. Appl. Anal.}}, 21(5):1262--1293, 2019.

\bibitem{patnaik2020applications}
S.~Patnaik, J.~P. Hollkamp, and F.~Semperlotti.
\newblock Applications of variable-order fractional operators: a review.
\newblock {\em Proceedings of the Royal Society A}, 476(2234):20190498, 2020.

\bibitem{rektorys1987some}
K.~Rektorys.
\newblock Some aspects of numerical solution of evolution equations by the
  method of discretization in time.
\newblock In {\em North-Holland Mathematics Studies}, volume 133, pages
  259--268. Elsevier, 1987.

\bibitem{Roubicek2005}
T.~Roub\'{i}{\v c}ek.
\newblock {\em Nonlinear partial differential equations with applications},
  volume 153 of {\em ISNM}.
\newblock Birkh\"auser Verlag, Basel, 2005.

\bibitem{sakamoto2011initial}
K.~Sakamoto and M.~Yamamoto.
\newblock Initial value/boundary value problems for fractional diffusion-wave
  equations and applications to some inverse problems.
\newblock {\em Journal of Mathematical Analysis and Applications},
  382(1):426--447, 2011.

\bibitem{samko1993fractional}
S.~G. Samko, A.~A. Kilbas, O.~I. Marichev, et~al.
\newblock {\em Fractional integrals and derivatives}, volume~1.
\newblock Gordon and breach science publishers, Yverdon Yverdon-les-Bains,
  Switzerland, 1993.

\bibitem{shen2011spectral}
J.~Shen, T.~Tang, and L.-L. Wang.
\newblock {\em Spectral methods: algorithms, analysis and applications},
  volume~41.
\newblock Springer Science \& Business Media, 2011.

\bibitem{sun2019review}
H.~Sun, A.~Chang, Y.~Zhang, and W.~Chen.
\newblock A review on variable-order fractional differential equations:
  mathematical foundations, physical models, numerical methods and
  applications.
\newblock {\em Fractional Calculus and Applied Analysis}, 22(1):27--59, 2019.

\bibitem{VanBockstal2020b}
K.~Van~Bockstal.
\newblock Existence and uniqueness of a weak solution to a non-autonomous
  time-fractional diffusion equation (of distributed order).
\newblock {\em Applied Mathematics Letters}, 109:106540, 2020.

\bibitem{VanBockstal2020c}
K.~Van~Bockstal.
\newblock Existence of a unique weak solution to a nonlinear non-autonomous
  time-fractional wave equation (of distributed-order).
\newblock {\em Mathematics}, 8(8):1283, 2020.

\bibitem{van2021existence}
K.~Van~Bockstal.
\newblock Existence of a unique weak solution to a non-autonomous
  time-fractional diffusion equation with space-dependent variable order.
\newblock {\em Advances in Difference Equations}, 2021(1):1--43, 2021.

\bibitem{VanBockstal_2022_QM}
K.~Van~Bockstal, A.~S. Hendy, and M.~A. Zaky.
\newblock Space-dependent variable-order time-fractional wave equation:
  existence and uniqueness of its weak solution.
\newblock {\em Quaestiones Mathematicae}, 0(0):1--21, 2022.

\bibitem{van2022existence}
K.~Van~Bockstal, M.~A. Zaky, and A.~S. Hendy.
\newblock On the existence and uniqueness of solutions to a nonlinear variable
  order time-fractional reaction--diffusion equation with delay.
\newblock {\em Communications in Nonlinear Science and Numerical Simulation},
  115:106755, 2022.

\bibitem{Wang2019b}
H.~Wang and X.~Zheng.
\newblock Wellposedness and regularity of the variable-order time-fractional
  diffusion equations.
\newblock {\em Journal of Mathematical Analysis and Applications},
  475(2):1778--1802, 2019.

\bibitem{Zheng2020}
X.~Zheng and H.~Wang.
\newblock Wellposedness and smoothing properties of history-state-based
  variable-order time-fractional diffusion equations.
\newblock {\em Zeitschrift für angewandte Mathematik und Physik}, 71(1):34,
  2020.

\bibitem{Zheng2021}
X.~Zheng and H.~Wang.
\newblock The unique identification of variable-order fractional wave
  equations.
\newblock {\em Zeitschrift f{\"u}r angewandte Mathematik und Physik},
  72(3):1--11, 2021.

\bibitem{Zheng2022}
X.~Zheng and H.~Wang.
\newblock Analysis and discretization of a variable-order fractional wave
  equation.
\newblock {\em Communications in Nonlinear Science and Numerical Simulation},
  104:106047, 2022.

\bibitem{zhuang2009numerical}
P.~Zhuang, F.~Liu, V.~Anh, and I.~Turner.
\newblock Numerical methods for the variable-order fractional
  advection-diffusion equation with a nonlinear source term.
\newblock {\em SIAM Journal on Numerical Analysis}, 47(3):1760--1781, 2009.

\end{thebibliography}
		\bibliographystyle{abbrv}
\end{document}